\documentclass[12pt,reqno]{amsart}

\newtheorem{theorem}{Theorem}[section]
\newtheorem{lemma}[theorem]{Lemma}

\theoremstyle{definition}   
\newtheorem{definition}{Definition}
\newtheorem{example}[theorem]{Example}

\theoremstyle{remark}

\newtheorem{conjecture}[theorem]{Conjecture}

\numberwithin{equation}{section}

\usepackage{times}
\usepackage{enumerate}
\usepackage{graphicx,adjustbox}
\usepackage{hyperref}
\usepackage{amsmath,amssymb}



\usepackage{amscd}
\usepackage{graphicx}
\usepackage[all]{xy}


\title[Numerical Semigroups Generated by Concatenation]
{Numerical Semigroups Generated by Concatenation of Arithmetic Sequences}
\author{
Ranjana Mehta
\and
Joydip Saha
\and
Indranath Sengupta
}
\date{}

\address{\small \rm  Discipline of Mathematics, IIT Gandhinagar, Palaj, Gandhinagar, 
Gujarat 382355, INDIA.}
\email{mehta.n.ranjana@gmail.com}

\address{\small \rm  Discipline of Mathematics, IIT Gandhinagar, Palaj, Gandhinagar, 
Gujarat 382355, INDIA.} 
\email{saha.joydip56@gmail.com}
\thanks{The second author thanks SERB, Government of India for the Research Associate 
position at IIT Gandhinagar, through the research project EMR/2015/000776.}

\address{\small \rm  Discipline of Mathematics, IIT Gandhinagar, Palaj, Gandhinagar, 
Gujarat 382355, INDIA.}
\email{indranathsg@iitgn.ac.in}
\thanks{The third author is the corresponding author. The author 
thanks SERB, Government of India for their support through the project EMR/2015/000776.}

\date{}

\subjclass[2010]{Primary 13C40, 13P10.}

\keywords{Concatenation, Numerical semigroups, Symmetric numerical semigroups, Ap\'{e}ry set, 
Frobenius number, Minimal presentation, Monomial curves}

\allowdisplaybreaks

\begin{document}

\begin{abstract}
We introduce the notion of numerical semigroups generated by concatenation 
of arithmetic sequences and show that this class of numerical semigroups 
exhibit multiple interesting behaviours. 
\end{abstract}
\maketitle

\section{introduction}
A \textit{numerical semigroup} $\Gamma$ is a subset of the set of nonnegative integers 
$\mathbb{N}$, closed under addition, contains zero and generates $\mathbb{Z}$ as 
a group. It follows that (see \cite{rgs}) the set $\mathbb{N}\setminus \Gamma$ is 
finite and that the semigroup $\Gamma$ has a unique minimal system of generators 
$n_{0} < n_{1} < \cdots < n_{p}$. The greatest integer not belonging to $\Gamma$ 
is called the \textit{Frobenius number} of $\Gamma$, denoted by $F(\Gamma)$. The integers 
$n_{0}$ and $p + 1$ are known as the \textit{multiplicity} and the 
\textit{embedding dimension} of the semigroup $\Gamma$, usually 
denoted by $m(\Gamma)$ and $e(\Gamma)$ respectively. The 
\textit{Ap\'{e}ry set} of $\Gamma$ with respect to a non-zero $\mathbf{a}\in \Gamma$ is 
defined to be the set $\rm{Ap}(\Gamma,\mathbf{a})=\{s\in \Gamma\mid s-\mathbf{a}\notin \Gamma\}$. 
The numerical semigroup $\Gamma$ is \textit{symmetric} if $F(\Gamma)$ is odd and 
$x\in \mathbb{Z}\setminus \Gamma$ implies $F(\Gamma)-x\in \Gamma$. Given 
integers $n_{0} < n_{1} < \cdots < n_{p}$; the map 
$\nu : k[x_{0}, \ldots, x_{p}]\longrightarrow k[t]$ defined as 
$\nu(x_{i}) = t^{n_{i}}$, $0\leq i\leq p$ defines a parametrization 
for an affine monomial curve; the ideal $\ker(\nu)=\mathfrak{p}$ is called the 
defining ideal of the monomial curve defined by the parametrization 
$\nu(x_{i}) = t^{n_{i}}$, $0\leq i\leq p$. Kunz \cite{kunz} proved that the semigroup 
$\Gamma$ generated by the integers $n_{0}, \ldots , n_{p}$ is symmetric if 
and only if the monomial curved parametrized by 
$x_{0}=t^{n_{0}}, \ldots , x_{p}=t^{n_{p}}$ is Gorenstein. The defining ideal 
$\mathfrak{p}$ is a graded ideal with respect to the weighted gradation 
and therefore any two minimal generating sets of $\mathfrak{p}$ have the same cardinality. 
Similarly, by an abuse of notation, one can define a semigroup homomorphism 
$\nu: \mathbb{N}^{p+1} \rightarrow \mathbb{N}$ as 
$\nu((a_{0}, \ldots , a_{p})) = a_{0}n_{0} + a_{1}n_{1} + \cdots + a_{p}n_{p}$. 
Let $\sigma$ denote the kernel of congruence of the map $\nu$. It is known 
that $\sigma$ is finitely generated. The minimal number of generators of the 
ideal $\mathfrak{p}$, that is, the cardinality of a minimal generating set of 
$\mathfrak{p}$ is the same as the minimal cardinality of a system of generators 
of $\sigma$. Henceforth, we will be using these two notions interchangeably.
\medskip

The central 
theme behind this work is to understand the situations, when, given an integer 
$e$, the minimal number of generators for the defining ideal $\mathfrak{p}$ 
is a bounded function of the embedding dimension $e(\Gamma)=e$. The symmetry 
condition on $\Gamma$ is a good condition to start with. Given an integer 
$e\geq 4$, it is not completely understood whether the symmetry condition 
on $\Gamma$ in embedding dimension $e$ ensures that the minimal cardinality of 
a system of generators of $\sigma$ defined above is a bounded function of $e$. 
This was answered in affirmative by Bresinsky for $e=4$ in \cite{bre1}, and 
for certain cases of $e=5$ in \cite{bre2}. Rosales \cite{ros1} 
constructed numerical semigroups for a given multiplicity $m$ and 
embedding dimension $e$, which are symmetric, and showed that the 
cardinality of a minimal presentation of these semigroups is a 
bounded function of the embedding dimension $e$. In fact using 
the pertinent results obtained in \cite{bre1}, \cite{herzog}, 
\cite{ros3}, \cite{rgs1}, one can compute the cardinality of a 
minimal presentation of a symmetric numerical semigroup with 
multiplicity $m\leq 8$. This remains an open question in general, 
whether symmetry condition on the numerical semigroup $e\geq 5$ 
imposes an upper bound on the cardinality of a minimal presentation 
of a numerical semigroup $\Gamma$. On the other hand, Bresinsky \cite{bre} 
produced a class of examples in embedding dimension $4$ such that 
the cardinality of a minimal presentation is unbounded. It was proved 
in \cite{mss2} that all the Betti numbers of Bresinsky's example of 
curves are unbounded. It is unknown if one can define a non-degenerate 
class of monomial curves in the affine space $\mathbb{A}^{e}$ such that the defining ideal 
requires an unbounded number of generators. Our work in this paper 
has helped us converge to the Conjecture \ref{conjecture} 
in section 3 on the unboundedness of minimal number of generators of the 
defining ideal in arbitrary embedding dimension. 
\medskip

The main aim of the paper 
is to show that by one common method of \textit{concatenation of arithmetic sequences}, 
we can understand at least three different classes 
of numerical semigroups which exhibit diverse characteristics like 
unboundedness of the number of generators of a minimal presentation (in 
section 3), symmetry (in section 4) and boundedness of the number of generators 
of a minimal presentation (in section 5). These three constructions 
have been named as the \textit{unbounded concatenation}, the \textit{symmetric concatenation} 
and the \textit{almost maximal concatenation} respectively. The symmetric concatenation 
also has the expected boundedness of the number of minimal relations. Let us 
introduce the notion of concatenation first before we move on to the specific classes 
in sections 3, 4 and 5.
\bigskip

\section{Concatenation of arithmetic sequences} 
Let us first recall Bresinsky's examples of monomial curves in $\mathbb{A}^{4}$, 
as defined in \cite{bre}. Let $q_2\geq 4$ be even; 
$q_{1} = q_{2}+1,\, d_1=q_{2}-1$. Set $n_{1}=q_{1}q_{2},\, n_{2}=q_{1}d_{1},\, 
n_{3}=q_{1}q_{2}+d_{1},\, n_{4}=q_{2}d_{1}$. It is clear that 
$\gcd (n_1,\, n_2,\, n_3,\, n_4)=1$. Bresinsky's 
examples lead us to the notion of concatenation of arithmetic sequences, 
which we define below.
\medskip

Let $e\geq 4$. Consider the sequence of positive integers 
$a<a+d<a+2d<\ldots<a+(n-1)d<b<b+d<\ldots<b+(m-1)d,$ where $m,n\in \mathbb{N}$, $m+n=e$  
and $\gcd(a,d)=1$. Let us assume that this sequence minimally generates 
the numerical semigroup $\Gamma=\langle a, a+d, a+2d,\ldots, a+(n-1)d, b, b+d,\ldots, b+(m-1)d\rangle$. 
Then, $\Gamma$ is called the \textit{numerical semigroup generated 
by concatenation of two arithmetic sequences with the same common difference $d$}. 
A comment is in order. The definition of 
concatenation includes the minimality of the sequence of integers generating the 
numerical semigroup $\Gamma$. For example, if we take $e=d=4$, $a=5$ and $b=10$, then 
the concatenated sequence that we get is $5, 9, 10, 14$, which is clearly not 
minimal. We will not consider such sequences for our purpose in order to 
avoid degeneracy. In fact, we will verify the minimality of the sequence first (see \ref{minimal1}, \ref{definegamma}, \ref{minimalcondition}, \ref{minimalexample}). 
\medskip

It can be easily verified that, up to a renaming of integers, Bresinsky's example 
defined above fall under this category. In this article, we show that the construction 
of concatenation shows diverse behaviours. The most interesting behaviour is exhibited 
by the construction in section. Here we produce the concatenated class in 
embedding dimension $4$, that requires unbounded number of elements in a minimal 
presentation. This class of numerical semigroups will be referred to as the 
\textit{unbounded concatenation}. The case for arbitrary embedding dimension 
has been posed as a conjecture. Moreover, it is worthwhile to calculate the 
pseudo-Frobenius numbers and type in this case, which we anticipate would 
exhibit similar unbounded behaviour. The reason behind this anticipation is the 
following: It was proved in \cite{mss2} that the Betti numbers of 
Bresinsky's examples are unbounded. It is interesting to note that the 
Cohen-Macaulay type of a numerical semigroup ring $k[\Gamma]$ is given 
by the last Betti number and that is given by the number $\mu(Der_{k}(k[\Gamma]) + 1$ 
(see Corollary 6.2 in \cite{patsen}), where $\mu(Der_{k}(k[\Gamma])$ denotes 
the minimal number of generators of the derivation module $Der_{k}(k[\Gamma])$. 
\medskip

In section 4, we produce a concatenated 
family which are symmetric numerical semigroups; we name this as the 
\textit{symmetric concatenation} and this generalizes the construction by 
Rosales in \cite{ros1}. We calculate the Ap\'{e}ry set in order to prove that the 
cardinality of minimal presentation is a bounded function of the embedding 
dimension $e$ and in turn give an affirmative answer to the question on the 
boundedness of the number of minimal relations for a symmetric numerical 
semigroup. Moreover, the Frobenius number and the pseudo-Frobenius numbers are 
the same in this case and the type will be $1$.
\medskip

Numerical semigroups with the property ``multiplicity= embedding dimension+1" has 
been studied before in \cite{rgs1}, where it was proved that the minimal number 
of generators for the defining ideal of this class of numerical semigroups 
is a bounded function of $e$. In section 5 we examine numerical semigroups 
formed by concatenation together with the condition ``multiplicity= embedding 
dimension+1"; we call this the \textit{almost maximal concatenation}. We calculate 
the Ap\'{e}ry set, the Frobenius number and the pseudo-Frobenius numbers explicitly. 
We also give a complete description of a minimal generating set for the ideal 
$\mathfrak{p}$ for this class of numerical semigroups.   

\section{Unbounded concatenation}
We show that the concatenation construction is likely to give us examples of 
numerical semigroups in arbitrary embedding dimension with unbounded minimal 
presentation. Let us first propose our example in 
arbitrary embedding dimension and then focus on embedding dimension $4$. The case 
for an arbitrary embedding dimension has been posed as a conjecture. 
\medskip

\begin{lemma} Let $e\geq 4$, $n\geq 5$ and $q\geq 0$. Let us define
$m_{i}:=n^2+(e-2)n+q+i$, for $0\leq i\leq e-3 $ and $m_{e-2}:=n^2+(e-1)n+q+(e-3)$, 
$m_{e-1}:=n^2+(e-1)n+q+(e-2)$. Let 
$\mathfrak{S}_{(n,e,q)}=\langle m_{0},\ldots, m_{e-1}\rangle$, then 
$\{m_{0},\ldots,m_{e-1}\}$ is a minimal generating set for the semigroup 
$\mathfrak{S}_{(n,e,q)}$.
\end{lemma}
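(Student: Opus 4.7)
The plan is to prove minimality by a size-comparison argument: show that $2m_0 > m_{e-1}$, which forces every non-negative integer combination $\sum_{j \ne i} c_j m_j$ with $\sum_j c_j \ge 2$ to exceed $m_{e-1}$, hence every $m_i$. Minimality then reduces to checking pairwise distinctness of the $m_i$, since a representation $m_i = \sum_{j \ne i} c_j m_j$ with $\sum_j c_j = 1$ would force $m_i = m_j$ for some $j \ne i$, contradicting distinctness.

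First I would confirm distinctness: $m_0, m_1, \ldots, m_{e-3}$ is a block of $e-2$ consecutive integers, while $m_{e-2} = m_{e-3} + n$ and $m_{e-1} = m_{e-2} + 1$; since $n \ge 5$, the trailing pair lies strictly above the initial block, and all $e$ values are pairwise distinct. Next I would carry out the key estimate
\[
2m_0 - m_{e-1} \;=\; n^2 + (e-3)n + q - (e-2) \;=\; n^2 + (e-3)(n-1) + q - 1,
\]
and invoke $n \ge 5$, $e \ge 4$, $q \ge 0$ to bound this below by $25 + 4 + 0 - 1 = 28 > 0$. Combining the two observations closes the argument: no $m_i$ can equal a single other generator by distinctness, and no $m_i$ can equal a sum of two or more generators since such a sum is at least $2m_0 > m_{e-1} \ge m_i$.

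There is no real conceptual obstacle here: the proof is a purely quantitative size comparison, and the hypotheses $n \ge 5$, $e \ge 4$, $q \ge 0$ are precisely calibrated to make the single inequality $2m_0 > m_{e-1}$ hold uniformly in $e$, $n$, and $q$. The one spot requiring a bit of care is the rewriting of $2m_0 - m_{e-1}$ in the factored form $n^2 + (e-3)(n-1) + q - 1$, which makes positivity transparent; without this regrouping, the apparent dependence on $e$ in $(e-3)n - (e-2)$ is mildly distracting and could tempt one into a case split that is in fact unnecessary.
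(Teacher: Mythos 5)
Your argument is correct and is essentially the paper's own proof: the paper simply observes that $m_i + m_j > m_k$ for all $i,j,k$, which is exactly your inequality $2m_0 > m_{e-1}$, and concludes minimality. You have merely filled in the verification (the regrouping $2m_0 - m_{e-1} = n^2 + (e-3)(n-1) + q - 1 > 0$) and the pairwise-distinctness check that the paper leaves implicit.
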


\proof It is easy to observe that $m_{i}+m_{j}>m_{k}$ for $0\leq i,j,k\leq e-1$. 
Therefore $\{m_{0},\ldots,m_{e-1}\}$ is a minimal generating set for the semigroup 
$\mathfrak{S}_{(n,e,q)}$. \qed
\medskip

Let $e\geq 4$, $n\geq 5$, $q\geq 0$ and  $\mathcal{Q}_{(n,e,q)}\subset k[x_{0},\ldots,x_{e-1}]$ 
be defining ideal of $\mathfrak{S}_{(n,e,q)}$.
\medskip

\begin{lemma}\label{minimal1}
Let  $\mathfrak{h}_{i}=x_{0}^{i}x_{1}^{n+2-i}-x_{e-2}^{i}x_{e-1}^{n+1-i}$ for 
$0\leq i\leq n+1 $. Then $\{\mathfrak{h}_{0},\ldots,\mathfrak{h}_{n+1}\}$ is 
minimal and contained in the ideal $\mathcal{Q}_{(n,e,e-4)}\subset k[x_{0},\ldots,x_{e-1}]$.
\end{lemma}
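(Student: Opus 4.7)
The plan is to split the claim into two independent checks: (i) each $\mathfrak{h}_i$ lies in $\mathcal{Q}_{(n,e,e-4)}$, and (ii) the set $\{\mathfrak{h}_0,\dots,\mathfrak{h}_{n+1}\}$ is minimal, in the sense that no $\mathfrak{h}_i$ lies in the ideal generated by the remaining $\mathfrak{h}_j$'s.

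For (i), containment $\mathfrak{h}_i \in \mathcal{Q}_{(n,e,e-4)}$ is equivalent to the semigroup identity $i\,m_0 + (n+2-i)\,m_1 = i\,m_{e-2} + (n+1-i)\,m_{e-1}$. With $q=e-4$ one has $m_1 - m_0 = 1$ and $m_{e-1} - m_{e-2} = 1$, so the $i$-dependence on both sides is simply $-i$ and the identity collapses to the $i$-free equality $(n+2)\,m_1 = (n+1)\,m_{e-1}$. This I would verify by direct expansion from the definitions; both sides evaluate to $n^{3} + en^{2} + (3e-7)n + (2e-6)$.

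For (ii), the key is a $\nu$-degree argument. Each $\mathfrak{h}_i$ is $\nu$-homogeneous of degree $s_i := (n+2)\,m_1 - i$, so the values $s_0 > s_1 > \cdots > s_{n+1}$ are pairwise distinct, with consecutive gaps of size $1$. If $\mathfrak{h}_i = \sum_{j \neq i} f_j\,\mathfrak{h}_j$ for polynomials $f_j$, then passing to $\nu$-homogeneous components one may assume each $f_j$ is $\nu$-homogeneous of degree $s_i - s_j = j - i$. Since the $s$-values decrease with the index, only $j > i$ can contribute, forcing $\nu(f_j) \in \{1,2,\dots,n+1\}$. However, the smallest nonzero element of $\mathfrak{S}_{(n,e,e-4)}$ is $m_0 = n^{2} + (e-2)n + (e-4)$, which is strictly greater than $n+1$ for $n \ge 5$. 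Hence no nonzero monomial has $\nu$-degree in $\{1,\dots,n+1\}$, every $f_j$ vanishes, and we obtain the contradiction $\mathfrak{h}_i = 0$.

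I expect the containment computation to be the lengthiest but most routine step, while the genuine content sits in the minimality argument. The decisive numerical input there is that the maximal gap $n+1$ between distinct degrees $s_i$ is strictly smaller than the smallest generator $m_0$ of the semigroup, so the obstacle is mainly to identify this mechanism cleanly; once identified, no case analysis on $i$ is required.
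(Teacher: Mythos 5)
Your proposal is correct, and the minimality half takes a genuinely different route from the paper. For containment both arguments come down to the semigroup identity $im_{0}+(n+2-i)m_{1}=im_{e-2}+(n+1-i)m_{e-1}$; your reduction of it to the single $i$-free equality $(n+2)m_{1}=(n+1)m_{e-1}$ (using $m_{1}-m_{0}=m_{e-1}-m_{e-2}=1$) is a tidy packaging of what the paper simply asserts, and your expansion $n^{3}+en^{2}+(3e-7)n+(2e-6)$ checks out, with $q=e-4$ being exactly the value for which the two sides agree. For minimality the paper specializes $x_{e-2}=x_{e-1}=0$, so that each $\mathfrak{h}_{j}$ collapses to the monomial $x_{0}^{j}x_{1}^{n+2-j}$, and then compares exponents of $x_{0}$ and $x_{1}$ to see that $x_{0}^{i}x_{1}^{n+2-i}$ cannot occur in $\sum_{j\neq i}g'_{ij}x_{0}^{j}x_{1}^{n+2-j}$. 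You instead exploit the weighted grading: the degrees $s_{j}=(n+2)m_{1}-j$ are distinct with gaps at most $n+1$, while the smallest positive degree of any monomial is $m_{0}=n^{2}+(e-2)n+(e-4)>n+1$, so every coefficient in a putative relation $\mathfrak{h}_{i}=\sum_{j\neq i}f_{j}\mathfrak{h}_{j}$ is forced to be homogeneous of an unattainable degree and hence zero. Your argument buys uniformity (no case split on $j<i$ versus $j>i$ beyond the sign of $j-i$, and no specialization step) and makes explicit the numerical mechanism --- gap between degrees versus multiplicity --- that the paper's exponent count uses only implicitly; the paper's argument is more elementary in that it needs no appeal to the graded structure of $\mathcal{Q}_{(n,e,e-4)}$, only the evaluation homomorphism $x_{e-2}=x_{e-1}=0$. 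Both are complete proofs of the stated lemma.
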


\proof Since $im_{0}+(n+2-i)m_{1}=im_{e-2}+(n+1-i)m_{e-1}$, we have 
$\{\mathfrak{h}_{0},\ldots, \mathfrak{h}_{n+1}\}\subset \mathcal{Q}_{(n,e,e-4)}$. 
Suppose $\mathfrak{h}_{i}=\displaystyle \sum_{j=0,j\neq i}^{n+1}g_{ij}\mathfrak{h}_{j}$, 
where $g_{ij}\in k[x_{0},\ldots,x_{e-1}] $ for $0\leq i\leq n+1$. If we take 
$x_{e-2}=x_{e-1}=0$, then we get 
$$x_{0}^{i}x_{1}^{n+2-i}= \displaystyle \sum_{j=0,j\neq i}^{n+1}g^{'}_{ij}x_{0}^{j}x_{1}^{n+2-j}=\displaystyle \sum_{j=0}^{i-1}g^{'}_{ij}x_{0}^{j}x_{1}^{n+2-j}+\displaystyle \sum_{j=i+1}^{n+1}g^{'}_{ij}x_{0}^{j}x_{1}^{n+2-j}=\mathfrak{A}+\mathfrak{B},$$  
where $g^{'}_{ij}\in k[x_{0},\ldots,x_{e-3}] $ for $0\leq i\leq n+1$. Since power of 
$x_{1}$ in each monomial in $\mathfrak{A}$ is greater than $n+2-i $ and power of 
$x_{0}$ in each monomial in $\mathfrak{B}$ is greater than $i$, we get a contradiction.\qed
\bigskip

\begin{conjecture}\label{conjecture}
The set $\{\mathfrak{h}_{0},\ldots,\mathfrak{h}_{n+1}\}$ is a part of a minimal 
generating set for the ideal  $\mathcal{Q}_{(n,e,e-4)}\subset k[x_{0},\ldots,x_{e-1}]$, 
hence $\mu(\mathcal{Q}_{(n,e,e-4)})\geq n+2$. Moreover, the set 
$\{\mu(\mathcal{Q}_{(n,e,q)})\mid n\geq 5,e\geq 4, q\geq 0\}$ is unbounded above. 
\end{conjecture}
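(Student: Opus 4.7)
The plan is to reduce the first assertion to a connectivity question in the fibers of the semigroup map $\nu$. By the standard criterion (Herzog, Rosales, Briales--Pis\'on, et al.), $\mu(\mathcal{Q}_{(n,e,e-4)})=\sum_{s}(c_s-1)$, summed over $s\in\mathfrak{S}_{(n,e,e-4)}$, where $c_s$ counts the connected components of the graph $G_s$ whose vertices are the monomials $x^{\mathbf{a}}$ with $\nu(x^{\mathbf{a}})=s$ and whose edges join pairs of monomials sharing at least one variable. Equivalently, a binomial $f-g\in\mathcal{Q}_{(n,e,e-4)}$ belongs to some minimal generating set if and only if $f$ and $g$ lie in distinct components of $G_{\nu(f)}$. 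The first part of the conjecture therefore reduces to showing that, for each $0\le i\le n+1$, the two monomials of $\mathfrak{h}_i$ lie in distinct components of the fiber $G_{s_i}$, where $s_i=im_0+(n+2-i)m_1$.

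The next step is to enumerate the vertices of $G_{s_i}$. Using $m_j-m_0=j$ for $0\le j\le e-3$, $m_{e-2}-m_0=n+e-3$, and $m_{e-1}-m_0=n+e-2$, the fiber equation becomes $Am_0+B=(n+2)m_0+(n+2-i)$ with $A=\sum a_j$ and $B=\sum_{j\ge 1}(m_j-m_0)a_j$. A short size comparison, using that $m_0$ grows quadratically in $n$ while $B$ is bounded linearly in $n$ times $A$, pins down $A\in\{n+1,n+2\}$: the $A=n+2$ vertices satisfy $B=n+2-i$, and the $A=n+1$ vertices satisfy $B=m_0+(n+2-i)$.

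The heart of the argument is to show that $G_{s_i}$ separates along the partition of variables into \emph{low} indices $\{x_0,\ldots,x_{e-3}\}$ and \emph{high} indices $\{x_{e-2},x_{e-1}\}$: no fiber vertex contains variables of each type. On the $A=n+2$ side, the inequality $(n+e-3)a_{e-2}+(n+e-2)a_{e-1}\le n+2-i$ forces $a_{e-2}=a_{e-1}=0$ as soon as $e\ge 5$ and $i$ is not at the extreme. On the $A=n+1$ side, the very large residual $B=m_0+(n+2-i)$ translates, after eliminating $a_{e-1}$, into the tame Diophantine equation $(n+e-2)a_0+(n+e-3)a_1+\cdots+(n+1)a_{e-3}+a_{e-2}=i$; for $1\le i\le n$ the only solution is $a_{e-2}=i$ with all other $a_j$ $(j\le e-3)$ equal to zero, giving the single high-side vertex $x_{e-2}^i x_{e-1}^{n+1-i}$. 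With both sides thus disjoint in variable support, $\mathfrak{h}_i$ bridges two distinct components and is forced into any minimal generating set.

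The most delicate part, and the main expected obstacle, is the boundary analysis at embedding dimension $e=4$ (where the separation inequality for $A=n+2$ is no longer strict) together with the extremal indices, where spurious bridging monomials of the form $x_0^{n+1}x_{e-1}$, $x_0^{n+1}x_{e-2}$, or $x_{e-3}x_{e-1}^n$ appear in the fibers and can merge the two candidate components; this portion will likely require a careful case-by-case analysis and possibly a mild reformulation of the conjecture's strict form at the endpoints. The second, unboundedness, assertion requires only a linear-in-$n$ lower bound on $\mu(\mathcal{Q}_{(n,e,e-4)})$, which is delivered by the main separation argument for the non-boundary indices; taking $n\to\infty$ with any fixed $e\ge 4$ then yields $\mu(\mathcal{Q}_{(n,e,e-4)})\to\infty$, as asserted.
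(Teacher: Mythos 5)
This statement is posed as a conjecture in the paper; the authors give no proof of it, so there is nothing to compare your argument against except the paper's partial evidence in embedding dimension $4$ (Theorems on $\mathcal{Q}_{(n,4,0)}$, proved there by Eto's criterion plus projection maps rather than by fiber graphs). Judged on its own terms, your strategy is sound and the core computation is correct: the reduction to connectedness of the fibers $G_{s_i}$ is the standard criterion, the degrees $s_i=(n+2)m_1-i$ are pairwise distinct so the binomials can be treated one degree at a time, the size comparison pinning $A\in\{n+1,n+2\}$ is right, and for $e\ge 5$ and $1\le i\le n$ your separation into purely low-supported and purely high-supported vertices does hold (note in addition that every $A=n+2$ vertex then has $a_0\ge i>0$, so the low side is a single component, and the high side is the single vertex $x_{e-2}^ix_{e-1}^{n+1-i}$). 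That already yields $\mu(\mathcal{Q}_{(n,e,e-4)})\ge n$ for every $e\ge 4$ (for $e=4$ one gets the indices $2\le i\le n$), which is all the unboundedness assertion needs; so the second half of the conjecture is genuinely within reach of your argument once the interior-index case is written out in full.

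The gap is that the boundary analysis you defer is not a formality: it is where the conjecture, as literally stated, breaks. For $e=4$ one has $\mathfrak{h}_i=-f_{n+1-i}$ in the paper's notation, and the paper's own identities $f_{n+1}=-(x_3g_1+x_0^ng_2+x_1h_0)$, $f_n=-(x_2g_1+x_0h_0)$, $f_0=-(x_1g_1+h_n)$ exhibit $\mathfrak{h}_0,\mathfrak{h}_1,\mathfrak{h}_{n+1}$ as elements of $\mathfrak{m}\,\mathcal{Q}_{(n,4,0)}$ (every coefficient is a non-unit), so these three binomials cannot belong to \emph{any} minimal generating set; the full set $\{\mathfrak{h}_0,\ldots,\mathfrak{h}_{n+1}\}$ is therefore not part of a minimal generating set when $e=4$, even though $\mu(\mathcal{Q}_{(n,4,0)})=2(n+1)\ge n+2$ still holds. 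In graph terms, exactly the mixed-support vertices you flag ($x_0^{n+1}x_3$ in the fiber of $s_0$, $x_0^{n+1}x_2$ in the fiber of $s_1$, $x_1x_3^n$ in the fiber of $s_{n+1}$) merge the two candidate components when $e=4$. So your hedge about a "mild reformulation at the endpoints" is not optional caution but a necessary correction, and a complete treatment must either restrict the first assertion to $e\ge 5$ (where the fiber analysis appears to survive the extreme indices: for $i=n+1$ the fiber is the three pairwise support-disjoint vertices $x_0^{n+1}x_1$, $x_{e-2}^{n+1}$, $x_{e-3}x_{e-1}^n$, and $\mathfrak{h}_{n+1}$ still bridges two distinct components) or restate it for the subfamily $\{\mathfrak{h}_2,\ldots,\mathfrak{h}_n\}$. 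As written, the proposal proves neither assertion completely, but it correctly isolates what remains to be done and, for the unboundedness claim, the missing steps are routine.
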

\medskip

Let us now consider the above of class of numerical semigroups for the embedding 
dimension $e=4$ and show that they indeed have an unbounded minimal presentation. 
Let us first define the following special sets of binomials.
$$A_{1}=\{f_{\mu}\mid f_{\mu}=x_{2}^{(n+1)-\mu}x_{3}^{\mu}-x_{0}^{(n+1)-\mu}x_{1}^{\mu+1}, 0\leq \mu \leq n+1\},$$
$$A_{2}=\{h_{t}\mid h_{t}=x_{1}^{(n+1)-t}x_{3}^{t}-x_{0}^{n-t}x_{2}^{t+1}, 0\leq t\leq n\},$$
$$g_{1}=x_{0}^{n+1}-x_{3}^{n}, \quad g_{2}=x_{1}x_{2}-x_{0}x_{3}.$$

\begin{theorem} The set $J:=A_{1}\cup A_{2}\cup\{g_{1},g_{2}\}$ is a generating set 
for the ideal $\mathcal{Q}_{(n,4,0)}\subset k[x_{0},x_{1},x_{2},x_{3}]$.
\end{theorem}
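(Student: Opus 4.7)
The plan is to establish $\mathcal{Q}_{(n,4,0)} \subseteq (J)$; the reverse containment is immediate from a direct check that each element of $J$ is $\nu$-homogeneous and lies in $\ker \nu$. Since $\mathcal{Q}_{(n,4,0)}$ is a toric ideal, it is generated by pure binomials $x^{\alpha} - x^{\beta}$ with $\nu(\alpha) = \nu(\beta)$, so it suffices to show every such binomial belongs to $(J)$.

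I would approach this as a connectedness problem on the fibers of $\nu$. For each $d \in \mathfrak{S}_{(n,4,0)}$, define a graph $G_d$ whose vertices are the monomials in $\nu^{-1}(d)$ and whose edges connect any two monomials whose difference is a monomial multiple of a generator of $J$; the goal is to prove that $G_d$ is connected for every $d$. I would orient the generators as rewriting rules: $g_2 : x_1 x_2 \rightsquigarrow x_0 x_3$ so that after repeated application every monomial has $\min\{\text{exp}(x_1), \text{exp}(x_2)\} = 0$; $g_1 : x_0^{n+1} \rightsquigarrow x_3^{n}$ bounding the $x_0$-exponent by $n$; the $f_\mu$ relations to clear remaining $x_2, x_3$ blocks into $x_0, x_1$ blocks; and the $h_t$ relations to handle the complementary transition between $x_1, x_3$ blocks and $x_0, x_2$ blocks. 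After exhaustive application, every monomial reduces modulo $(J)$ to a canonical form of the type $x_0^a x_1^b x_3^c$ with $0 \le a \le n$ and a suitable bound on $b$ (most likely $b \le n+1$, coming from $h_0 : x_1^{n+1} \equiv x_0^{n} x_2$ re-fed through $g_2$ and $f_\mu$). Injectivity of $\nu$ on this canonical set then follows from an arithmetic computation using $m_0 = n^2+2n$, $m_1 = m_0+1$, $m_2 = m_0+n+1$, $m_3 = m_0+n+2$.

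The main obstacle is confluence: several rewriting rules can apply at a single monomial (for instance, $x_2^{n+1}$ may be reduced by $f_0$ directly, or first brought into $x_0, x_1$ form via $g_2$ followed by $h_0$), and one must show that all reduction paths terminate at the same canonical form. I would handle this by exhibiting explicit syzygy identities among the generators of $J$; a representative example is
$$x_2 f_{\mu+1} - x_3 f_\mu \;=\; x_0^{n-\mu} x_1^{\mu+1} g_2 \qquad (0 \le \mu \le n),$$
which shows that the ambiguity between $f_\mu$ and $f_{\mu+1}$ on the shared monomial $x_2^{n+1-\mu} x_3^{\mu+1}$ is resolved by $g_2$; analogous identities would be produced for the overlaps among the $h_t$ and for the interactions between $A_1$, $A_2$, $g_1$, $g_2$. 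Once these critical-pair resolutions are in place, the well-definedness of the canonical form together with its bijection with $\mathfrak{S}_{(n,4,0)}$ immediately yields $(J) = \mathcal{Q}_{(n,4,0)}$.
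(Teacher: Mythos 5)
Your overall strategy --- reduce every monomial to a normal form using the elements of $J$ as rewriting rules, verify confluence by resolving critical pairs, then check that $\nu$ is injective on normal forms --- is a legitimate route for toric ideals and is genuinely different from the paper's argument, which invokes Eto's criterion (Theorem 4.8 of \cite{eto}): after checking $J\subseteq\mathcal{Q}_{(n,4,0)}$, one computes $J+\langle x_{0}\rangle$ as an explicit monomial ideal and counts exactly $n^{2}+2n=m_{0}$ standard monomials, a single finite verification. As written, however, your proposal contains a concrete error. The claimed canonical form $x_{0}^{a}x_{1}^{b}x_{3}^{c}$ cannot be correct: the monomial $x_{2}$ itself is irreducible under every rule you list ($g_{2}$ needs $x_{1}x_{2}$, $g_{1}$ needs $x_{0}^{n+1}$, $f_{\mu}$ needs $x_{2}$--$x_{3}$ degree $n+1$, $h_{t}$ needs $x_{1}$), and moreover no monomial in $x_{0},x_{1},x_{3}$ has $\nu$-value $m_{2}=n^{2}+3n+1$, since any such monomial of degree at least $2$ has value at least $2(n^{2}+2n)>n^{2}+3n+1$ and the three degree-one values are $n^{2}+2n$, $n^{2}+2n+1$, $n^{2}+3n+2$. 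So the normal-form set necessarily contains monomials involving $x_{2}$, and both its description and the injectivity of $\nu$ on it are substantially more delicate than you indicate.

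Beyond that, two essential steps are deferred rather than done: (i) you resolve only the $f_{\mu}$/$f_{\mu+1}$ overlaps (where the identity should read $x_{2}f_{\mu+1}-x_{3}f_{\mu}=-x_{0}^{n-\mu}x_{1}^{\mu+1}g_{2}$), while the overlaps among the $h_{t}$, between $h_{0}$ and $g_{2}$ on $x_{1}^{n+1}x_{2}$, between the $f_{\mu}$ and $g_{1}$, and between $A_{1}$ and $A_{2}$ are left as ``analogous identities would be produced''; these are precisely the places where such arguments break down when a proposed generating set is too small, so they cannot be waved through. (ii) Injectivity of $\nu$ on the (corrected) normal-form set is asserted, not proved, and since $R/\mathcal{Q}_{(n,4,0)}$ is infinite-dimensional this is an infinite family of arithmetic checks that must be organized. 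Until the normal forms are correctly identified, all critical pairs are resolved, and the bijection with $\mathfrak{S}_{(n,4,0)}$ is established, the proposal is a plan rather than a proof; carried to completion it essentially amounts to computing a Gr\"obner basis of $(J)$ and comparing standard monomials against the Ap\'ery set, which is the paper's dimension count with considerable extra overhead.
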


\begin{proof} We know that 
$$\mathfrak{S}_{(n,4,0)}=\langle n^{2}+2n, n^{2}+2n+1, n^{2}+3n+1, n^{2}+3n+2\rangle.$$ 
We use Theorem 4.8 in \cite{eto} in order to prove that $J:=A_{1}\cup A_{2}\cup\{g_{1},g_{2}\}$ 
is a generating set for the ideal $\mathcal{Q}_{(n,4,0)}\subset k[x_{0},x_{1},x_{2},x_{3}]$. 
It is therefore enough to prove the following claim:
\medskip
 
\noindent\textbf{Claim.} $\dim_{k} (R/(J+\langle x_{0}\rangle)) = n^{2}+2n$.
\medskip

\noindent\textbf{Proof of the Claim.} We have 
 $$J+\langle x_{0}\rangle =\langle \{x_{1}^{n+1},x_{2}^{n+1}, x_{2}^{n} x_{3},\ldots, x_{2}^{2}x_{3}^{n-1}, x_{1}^{n}x_{3},\ldots x_{1}^{2}x_{3}^{n-1}, x_{3}^{n},x_{1}x_{2},x_{0}\}\rangle.$$
Let us define 
$$J'=\langle \{x_{1}^{n+1},x_{2}^{(n+1)}, x_{2}^{n} x_{3},\ldots, x_{2}^{2}x_{3}^{n-1}, x_{1}^{n}x_{3},\ldots x_{1}^{2}x_{3}^{n-1}, x_{3}^{n},x_{1}x_{2}\}\rangle.$$
Therefore, $k[x_{0},x_{1},x_{2},x_{3}]/(J+\langle x_{0}\rangle)= k[x_{1},x_{2},x_{3}]/J'$. It is 
evident that the $k$-vector space $k[x_{1},x_{2},x_{3}]/J'$ is spanned by the set 
\begin{align*}
&\{1\}\cup \{x_{1}^{p}\mid 1\leq k\leq n\}\cup \{x_{2}^{l}\mid 1\leq l\leq n\} \cup \{x_{3}^{m}\mid 1\leq m \leq n-1 \}\\
&\cup\{
x_{1}^{r}x_{3}^{s}\mid 1\leq s\leq n-1, 1\leq r\leq n-s\}\cup \{
x_{1}^{r'}x_{3}^{s'}\mid 1\leq s'\leq n-1, 1\leq r'\leq n-s\}. 
\end{align*}

\noindent The cardinality of the above set is $n^{2}+2n$, which proves the claim.  
\end{proof}
\medskip

Note that $f_{n+1}=-(x_{3}\cdot g_{1}+x_{0}^{n}\cdot g_{2}+x_{1}\cdot h_{0})$, 
$f_{n}=-(x_{2}\cdot g_{1}+x_{0}\cdot h_{0})$ and $f_{0}=-(x_{1}\cdot g_{1}+h_{n})$. 
Let us define 
$$A'_{1}:=\{f_{\mu}\mid f_{\mu}=x_{2}^{(n+1)-\mu}x_{3}^{\mu}-x_{0}^{(n+1)-\mu}x_{1}^{\mu+1}, 1\leq \mu \leq n-1\}.$$
\medskip

\begin{theorem}
The set $A'_{1}\cup A_{2}\cup\{g_{1},g_{2}\}$ is a minimal generating set for the ideal 
$\mathcal{Q}_{(n,4,0)}\subset k[x_{0},x_{1},x_{2},x_{3}]$. Hence 
$\mu(\mathcal{Q}_{(n,4,0)})= 2(n+1)$. 
\end{theorem}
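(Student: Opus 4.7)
My first step is to verify that $J' := A'_1 \cup A_2 \cup \{g_1, g_2\}$ is still a generating set for $\mathcal{Q}_{(n,4,0)}$. By the previous theorem, the larger set $J = A_1 \cup A_2 \cup \{g_1, g_2\}$ generates the ideal; the three identities displayed immediately before the theorem express $f_{n+1}, f_n, f_0$ (the three elements of $A_1 \setminus A'_1$) as combinations of elements of $J'$, so $J'$ generates as well. A direct count gives $|J'| = (n-1) + (n+1) + 2 = 2(n+1)$, which already forces $\mu(\mathcal{Q}_{(n,4,0)}) \leq 2(n+1)$.

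For the matching lower bound I would use the $\Gamma$-grading on $k[x_0, x_1, x_2, x_3]$ with $\deg x_i = m_i$ and compute the weights of the generators in $J'$:
\begin{align*}
w(g_2) &= m_1 + m_2 = 2n^2 + 5n + 2,\\
w(g_1) &= (n+1)m_0 = n(n+1)(n+2),\\
w(h_t) &= (n+1)(n^2 + 2n + 1 + t) \quad\text{for } 0 \leq t \leq n,\\
w(f_\mu) &= (n+1)(n^2 + 3n + 1) + \mu \quad\text{for } 1 \leq \mu \leq n-1.
\end{align*}
A routine comparison confirms these $2(n+1)$ weights are pairwise distinct; in fact they satisfy $w(g_2) < w(g_1) < w(h_0) < \cdots < w(h_n) < w(f_1) < \cdots < w(f_{n-1})$. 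By graded Nakayama applied to $\mathfrak{m} := (x_0, x_1, x_2, x_3)$, $\mu(\mathcal{Q}_{(n,4,0)}) = \dim_k \mathcal{Q}_{(n,4,0)}/\mathfrak{m}\mathcal{Q}_{(n,4,0)}$; since distinct weights place the images of our generators in distinct graded components of the quotient, the claim reduces to showing each $\beta \in J'$ is nonzero there, i.e., $\beta \notin \mathfrak{m}\mathcal{Q}_{(n,4,0)}$.

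I would verify this by specialization, generator by generator. For each $\beta$, the grading forces any witness $\beta = \sum q_j \gamma_j$ (with $q_j \in \mathfrak{m}$ and $\gamma_j \in J'$) in the weight-$w(\beta)$ component to involve only $\gamma_j$ of strictly smaller weight; setting two variables to zero then reduces this to a check in a monomial ideal. For $g_2$ (smallest weight) there is nothing to verify. For $g_1$ I would set $x_1 = x_2 = 0$: $g_1$ specializes to $x_0^{n+1} - x_3^n$, which lies outside the specialization $(x_0^2 x_3, x_0 x_3^2)$ of $\mathfrak{m}(g_2)$, since $x_0^{n+1}$ is not divisible by $x_3$. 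For $h_t$ I would set $x_1 = x_3 = 0$: $h_t$ becomes $-x_0^{n-t} x_2^{t+1}$, and the specialization of $\mathfrak{m}(g_1, g_2, h_0, \ldots, h_{t-1})$ is generated by the exponent vectors $(n+2,0), (n+1,1), (n-t'+1, t'+1), (n-t', t'+2)$ for $0 \leq t' \leq t-1$, none of which coordinate-wise dominates $(n-t, t+1)$. For $f_\mu$ I would set $x_0 = x_1 = 0$: $g_2$ and all $h_t$ with $t < n$ specialize to $0$; the remaining generators give exponent vectors $(1,n), (0,n+1)$ (from $g_1$), $(n+2,0), (n+1,1)$ (from $h_n$), together with $(n+2-\mu', \mu')$ and $(n+1-\mu', \mu'+1)$ for $1 \leq \mu' \leq \mu-1$ (from the surviving $f_{\mu'}$), and a coordinate-wise check using $\mu \leq n-1$ shows none of these dominate $(n+1-\mu, \mu)$, which is the exponent of $f_\mu|_{x_0 = x_1 = 0} = x_2^{n+1-\mu} x_3^\mu$.

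The main obstacle is this last step: ruling out dominance of $(n+1-\mu, \mu)$ by the two families $(n+2-\mu', \mu')$ and $(n+1-\mu', \mu'+1)$ coming from $f_{\mu'}$ with $\mu' \leq \mu-1$. Both dominance requirements fail because one coordinate forces $\mu' \geq \mu$ while the other forces $\mu' \leq \mu-1$, giving a clean contradiction. Combining the resulting lower bound $\mu(\mathcal{Q}_{(n,4,0)}) \geq 2(n+1)$ with the generation upper bound yields equality and the minimality of $J'$.
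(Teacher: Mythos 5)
Your proposal is correct and takes essentially the same route as the paper: both arguments reduce minimality to the three specializations $x_0=x_1=0$ (for the $f_\mu$), $x_1=x_3=0$ (for the $h_t$), and $x_1=x_2=0$ (for $g_1,g_2$), followed by the same monomial-exponent comparisons. Your graded-Nakayama/weight-ordering framing is just an explicit version of the minimality criterion the paper uses implicitly (and your explicit handling of the surviving $h_n\mapsto -x_2^{n+1}$ term under $x_0=x_1=0$ is in fact slightly more careful than the paper's displayed equation, which omits it harmlessly).
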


\begin{proof} Let $\pi_{01}:k[x_{0},x_{1},x_{2},x_{3}]\rightarrow k[x_{2},x_{3}]$ be 
such that $\pi_{01}(x_{0})=0$, $\pi_{01}(x_{1})=0$, $\pi_{01}(x_{2})=x_{2}$ and 
$\pi_{01}(x_{3})=x_{3}$. Given $1\leq \mu'\leq n-1$, let 
$$f_{\mu'}=x_{2}^{(n+1)-\mu'} x_{3}^{\mu'}-x_{0}^{(n+1)-\mu'} x_{1}^{\mu'+1} = 
\sum_{\mu=1, \mu \neq \mu'}^{n-1} \alpha_{\mu}f_{\mu}+\sum_{t=0}^{n} \beta_{t}h_{t}+\gamma_{1}g_{1}+\gamma_{2}g_{2},$$ 
for $\alpha_{\mu}, \beta_{t}, \gamma_{1}, \gamma_{2}\in k[x_{0},x_{1},x_{2},x_{3}]$. 
Applying $\pi_{01}$ on both sides of the above equation we get 
$$x_{2}^{(n+1)-\mu'} x_{3}^{\mu'}= \sum_{\mu=1, \mu\neq \mu'}^{n-1}\alpha_{\mu}(0,0,x_{2},x_{3})\left(x_{2}^{(n+1)-\mu}x_{3}^{\mu}\right)-\gamma_{1}(0,0,x_{2},x_{3})x_{3}^{n}.$$
We know that $1\leq \mu'\leq n-1$ and $\mu = \mu'$, therefore in the above expression 
the monomial $x_{2}^{(n+1)-\mu'} x_{3}^{\mu'}$ can not appear in the terms of 
$\gamma_{1}(0,0,x_{2},x_{3})x_{3}^{n}$.  If $\mu '< \mu $, then in each monomial 
appearing on the right hand side of the above expression, the indeterminate $x_{3}$ 
appears with an exponent that is strictly greater than $\mu'$; hence the expression 
is absurd. If $\mu '> \mu$, then $(n+1)-\mu '<(n+1)-\mu$. Therefore, in each monomial 
appearing on the right hand side of the above expression the indeterminate $x_{2}$ 
appears with an exponent that is strictly greater than $(n+1)-\mu'$; again absurd. 
Hence the above expression is not possible.
\medskip

Let $\pi_{13}:k[x_{0},x_{1},x_{2},x_{3}]\rightarrow k[x_{0},x_{2}]$ be such that 
$\pi_{13}(x_{0})=x_{0}$, $\pi_{13}(x_{1})=0$, $\pi_{13}(x_{2})=x_{2}$ and 
$\pi_{13}(x_{3})=0$. Let $h_{t'}=x_{1}^{(n+1)-t'}x_{3}^{t'}-x_{0}^{n-t'}x_{2}^{t'+1}$ 
for a given $0\leq t'\leq n$. Suppose that 
$$h_{t'}=\sum_{\mu=1}^{n-1} \alpha_{\mu}f_{\mu}+\sum_{t=0, t\neq t'}^{n} \beta_{t}h_{t}+\gamma_{1}g_{1}+\gamma_{2}g_{2},$$ 
for some $\alpha_{\mu}, \beta_{t}, \gamma_{1}, \gamma_{2}\in k[x_{0},x_{1},x_{2},x_{3}]$. 
Applying $\pi_{13}$ on both sides of the above expression we get 
$$x_{0}^{n-t'}x_{2}^{t'+1}=\sum_{t=0,t\neq t'}^{n}\beta_{t}(x_{0},0,x_{2},0)x_{0}^{n-t}x_{2}^{t+1}+\gamma_{1}(x_{0},0,x_{2},0)x_{0}^{n+1}.$$
We have $0\leq t'\leq n$, therefore the monomial $x_{0}^{n-t'}x_{2}^{t'+1}$ can not 
be written in the terms of $\gamma_{1}(0,0,x_{2},x_{3})x_{0}^{n+1}$.  If $t '< t$ 
then each monomial appearing on the right hand side of the above equation has exponent 
of $x_{2}$ strictly greater than $t'+1$. If $t '> t $ then $n-t'< n-t$, therefore 
each monomial appearing on the right hand side of the above equation has exponent 
of $x_{0}$ strictly greater than $n-t'$. Hence the above expression is impossible.
\medskip

Suppose that  
\begin{align*}
g_{1}=x_{0}^{n+1}-x_{3}^{n}&=\sum_{\mu=1}^{n-1}\alpha_{\mu}\left[x_{2}^{(n+1)-\mu}x_{3}^{\mu}-x_{0}^{(n+1)-\mu}x_{1}^{\mu+1}\right]\\
&+\sum_{t=0}^{n}\beta_{t}\left[x_{1}^{(n+1)-t}x_{3}^{t}-x_{0}^{n-t}x_{2}^{t+1}\right]
+\gamma_{2}(x_{1}x_{2}-x_{0}x_{3}),
\end{align*}
where $\alpha_{\mu}, \beta_{t}, \gamma_{1}, \gamma_{2}\in k[x_{0},x_{1},x_{2},x_{3}]$. 
Let $\pi_{12}:k[x_{0},x_{1},x_{2},x_{3}]\rightarrow k[x_{0},x_{3}]$ be such that 
$\pi_{12}(x_{0})=x_{0}, \pi_{12}(x_{1})=0, \pi_{12}(x_{2})=0$ and $\pi_{12}(x_{3})=x_{3}$. 
Applying $\pi_{12}$ on both sides of the above equation we get 
$$x_{0}^{n+1}-x_{3}^{n}=-\gamma_{2}(x_{0},0,0,x_{3})x_{0}x_{3},$$ 
which is absurd. 
\medskip

Suppose that 
\begin{align*}
g_{2}=x_{1}x_{2}-x_{0}x_{3}&=\sum_{\mu=1}^{n-1}\alpha_{\mu}\left[x_{2}^{(n+1)-\mu}x_{3}^{\mu}-x_{0}^{(n+1)-\mu}x_{1}^{\mu+1}\right]\\
&+\sum_{t=0}^{n}\beta_{t}\left[x_{1}^{(n+1)-t}x_{3}^{t}-x_{0}^{n-t}x_{2}^{t+1}\right]
+\gamma_{1}(x_{0}^{n+1}-x_{3}^{n}),
\end{align*}
where $\alpha_{\mu}, \beta_{t}, \gamma_{1}, \gamma_{2}\in k[x_{0},x_{1},x_{2},x_{3}]$. 
Applying $\pi_{12}$ on both sides of the equation we get 
$$x_{0}x_{3}=-\gamma_{1}(x_{0},0,0,x_{3})(x_{0}^{n+1}-x_{3}^{n}),$$ 
which is absurd.
\end{proof}

\section{Symmetric concatenation}
In this section we construct a class of symmetric numerical semigroup of embedding dimension $e \geq 4,$ which is a generalisation of Rosales' result, in the sense that if we consider the case $d=1$ it gives Rosales' construction given in \cite{ros1}. We prove that the cardinality of a minimal presentation of the semigroup is a bounded function of the embedding dimension $e$.

\subsection{Numerical semigroups $\Gamma_{(e,q,d)}(\mathcal{S})$ 
and $\Gamma_{(e,q,d)}(\mathcal{T})$}

\begin{theorem}\label{definegamma}
\begin{enumerate}
\item Let $e\geq 4$ be an integer, $q$ a positive integer and $m=e+2q+1$. 
Let $d$ be a positive integer that satisfies $\gcd(m,d)=1$. Let us define 
$\mathcal{S}=\{m,m+d,(q+1)m+(q+2)d,(q+1)m+(q+3)d,\ldots,(q+1)m+(q+e-1)d \}$. The set $\mathcal{S}$ 
is a minimal generating set for the numerical semigroup $\Gamma_{(e,q,d)}(\mathcal{S})$ generated 
by $\mathcal{S}$.
\item Let $e\geq 4$ be an integer, $q$ a positive even integer with 
$q\geq e-4$ and $m=e+2q$. Let $d$ be an odd positive 
integer that satisfies $\gcd(m,d)=1$. Let us define 
$\mathcal{T}=\{m,m+d,q(m+1)+(q-\frac{e-4}{2})d+\frac{e}{2},q(m+1)+(q-\frac{e-4}{2}+1)d+\frac{e}{2},
\ldots,q(m+1)+(q-\frac{e-4}{2}+(e-3))d+\frac{e}{2}\}$. The set $\mathcal{T}$ is a minimal generating 
set for the numerical semigroup $\Gamma_{(e,q,d)}(\mathcal{T})$ generated by $\mathcal{T}$.
\end{enumerate}
\end{theorem}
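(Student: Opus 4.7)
My plan is to verify in each part that $\gcd$ of the listed integers equals $1$ (immediate, since $\mathcal{S}$ and $\mathcal{T}$ both contain $m$ and $m+d$ with $\gcd(m,d)=1$) and then that every listed element is irredundant. My general strategy for irredundancy is the following: given a candidate nonnegative decomposition $z = \alpha m + \beta(m+d) + \sum \gamma_i z_i$ of a generator $z$ in terms of the other generators, substitute the explicit form of each $z_i$ and collect coefficients to obtain a single integer identity of the form $Xm = Yd$, where $X,Y$ are explicit integer-linear functions of $\alpha,\beta,\gamma_i$. Since $\gcd(m,d)=1$, there is a unique integer $\ell$ with $X = d\ell$ and $Y = m\ell$, and the candidate decomposition is equivalent to a pair of scalar equations whose right-hand sides are small and explicit in $\ell$. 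A short case analysis on $\ell$ and on $s := \sum \gamma_i$, driven only by nonnegativity, then disposes of each candidate.

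In part~(1), the minimality of $m$ is automatic (any nontrivial combination of the other generators is $\geq m+d > m$). For $m+d$, the two scalar equations read $\alpha + (q+1)s = 1 - d\ell$ and $\sum \gamma_i(q+i) = 1 + m\ell$; nonnegativity on both sides forces $\ell = 0$, then $s = 0$, then $0 = 1$. For each large $n_i = (q+1)m + (q+i)d$, the two equations are $\alpha + \beta + (q+1)s = (q+1) - d\ell$ and $\beta + \sum_{j \neq i}\gamma_j(q+j) = (q+i) + m\ell$. The case $\ell \leq -1$ is excluded because the RHS of the second equation becomes negative (using $m > q+i$); for $\ell = 0$ the first equation forces $s \leq 1$, and the subcases $s=0$ and $s=1$ give $\alpha = 1 - i < 0$ and $j_0 = i$ respectively; for $\ell \geq 1$ again $s \leq 1$, and the two equations together force either $\alpha + \beta < 0$ (when $s=1$) or $\beta$ to exceed its own upper bound (when $s=0$).

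Part~(2) proceeds by the same template after two observations: $m = e+2q$ is even (with $e$ even, implicit in the appearance of $e/2$ in the formula), and each $t_k$ has the compact form $qm + m/2 + (c+k)d$ with $c := q - (e-4)/2$, where the hypothesis $q \geq e-4$ guarantees $c \geq e/2 - 2 \geq 0$. Doubling the candidate identity to clear $m/2$ produces the scalar equations $2\alpha + 2\beta + (2q+1)s = (2q+1) - d\ell$ and $\beta + \sum \gamma_{k'}(c+k') = (c+k) + (m/2)\ell$. The key new ingredient is the parity of $2q+1$: in the crucial subcase $\ell = 0,\ s = 0$ the LHS of the first equation is even while the RHS is odd, giving an immediate contradiction that replaces the sign argument used in part~(1); the other subcases are handled as before, exploiting also that $d$ is odd. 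The main obstacle I anticipate in both parts is the $\ell \geq 1$ case when $d$ is small, where the upper bound $s \leq 1$ coming from the first scalar equation must be combined with a lower bound on $\beta$ forced by the second; the hypotheses $q \geq 1$ in part~(1) and $q \geq e-4$ in part~(2) are exactly what make this collision yield the desired contradiction.
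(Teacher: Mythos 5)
Your proposal is correct and follows essentially the same route as the paper: write the candidate dependence as a nonnegative integer combination, collect coefficients into an identity of the form $Xm=Yd$, use $\gcd(m,d)=1$ to introduce an integer multiplier $\ell$, and rule out each case of $\ell$ by sign and size constraints. Your case analysis is in fact more complete than the paper's (which asserts $l\geq 0$ without argument and dismisses part (2) as ``similar''), and your doubling-plus-parity handling of the half-integer coefficient in part (2) is a sensible way to make that case explicit.
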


\proof First of all it is easy to see that both the semigroups $\Gamma_{(e,q,d)}(\mathcal{S})$ 
and $\Gamma_{(e,q,d)}(\mathcal{T})$ are numerical semigroups and that follows from the simple 
observation that $\gcd(m, m+d)=1$. We now prove that both $\mathcal{S}$ and $\mathcal{T}$ are minimal.
\medskip

\noindent (1) Suppose that $(q+1)m+(q+2)d=x_{1}m+x_{2}(m+d)$, where $x_{1},x_{2}\geq 0$ 
are integers. Since $x_{1},x_{2}$ both are positive we have $x_{2} < q+2$. The equation 
$(x_{1}+x_{2}-(q+1))m=(q+2-x_{2})d$ and the fact that $\gcd(m,d)=1$ implies that 
$x_{2}=q+2-lm$, for some integer $l\geq 0$. If $l>0$ then $x_{2}<0$ gives a contradiction. 
If $l=0$ then $x_{2}=q+2$ also contradicts the fact that $x_{2} < q+2$. Therefore, 
$(q+1)m+(q+2)d$ does not belong to the semigroup generated by $m$ and $m+d$. 
\medskip

Similarly, assume that
$$(q+1)m+(q+k)d=x_{1}m+x_{2}(m+d)+\sum_{i=2}^{k-1} t_{i}((q+1)m+(q+i)d),$$
where $x_{1}, x_{2}, t_{i}$ are nonnegative integers. Then, we can write 
$x_{2}=(q+k)-lm-(\sum_{i=2}^{k-1} t_{i}(q+i))$ for some $l\geq 0$. If $l>0$, 
then $x_{2}<0$ gives a contradiction. If $l=0$, then $x_{2}=(q+k)-(\sum_{i=2}^{k-1} t_{i}(q+i))$ 
and therefore $x_{1}=1-k<0$, which also gives a contradiction. 
\medskip

\noindent (2) The proof is similar as in (i). \qed
\medskip

\begin{theorem}\label{apery}
\begin{enumerate}[(1)]
\item The Ap\'{e}ry set $\textrm{Ap}(\Gamma_{(e,q,d)}(\mathcal{S}),m)$ for the numerical 
semigroup $\Gamma_{(e,q,d)}(\mathcal{S})$ with respect to the element $m$ is 
$\beta_{1}\cup\beta_{2}\cup\beta_{3}$, where
\begin{align*}
\beta_{1}& =\{k(m+d)\mid 0\leq k\leq q+1\},\\
\beta_{2}& =\{k(m+d)+(q+1)m+(q+e-1)d \mid 0\leq k\leq q+1\},\\
\beta_{3}& =\{(q+1)m+(q+i)d\mid 2\leq i\leq e-2\}.
\end{align*} 
The Frobenius number of $\Gamma_{(e,q,d)}(\mathcal{S})$ is $4q^2+(2e+2d+4)q+e(d+1)+1$.

\item The Ap\'{e}ry set $\textrm{Ap}(\Gamma_{(e,q,d)},m)(\mathcal{T})$ for the numerical 
semigroup $\Gamma_{(e,q,d)}(\mathcal{T})$ with respect to the element $m$ is $\gamma_{1}\cup \gamma_{2}$, where
\begin{align*}
\gamma_{1} &=\{q(m+1)+(q-\frac{e-4}{2}+k)d+\frac{e}{2}\mid 0\leq k\leq (e-3)\}\\
\gamma_{2} &=\{k(m+d)\mid 0\leq k\leq 2q+1\}.
\end{align*}
\noindent The Frobenius number of $\Gamma_{(e,q,d)}(\mathcal{T})$ is $(e+2q+d)2q+d$.
\end{enumerate}
\end{theorem}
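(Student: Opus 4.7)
The plan is to invoke the standard characterization: for a numerical semigroup $\Gamma$ and a nonzero $m \in \Gamma$, a subset $W \subset \Gamma$ equals $\mathrm{Ap}(\Gamma,m)$ if and only if $|W| = m$, its elements are pairwise distinct modulo $m$, and $w - m \notin \Gamma$ for every $w \in W$. In each of parts (1) and (2) the argument therefore splits into three steps: (i) check the listed elements lie in $\Gamma$, which is immediate from the generators in $\mathcal{S}$ resp.\ $\mathcal{T}$; (ii) check distinct residues modulo $m$ and total count $m$; and (iii) check $w - m \notin \Gamma$ for each listed $w$. The Frobenius formula then follows from $F(\Gamma) = \max \mathrm{Ap}(\Gamma,m) - m$.

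For part (1), step (ii) is a direct read-off: since $\gcd(m,d) = 1$, multiplication by $d$ permutes $\mathbb{Z}/m\mathbb{Z}$, and the elements of $\beta_1$, $\beta_3$, $\beta_2$ have residues $kd \bmod m$ for $k$ running through $\{0,\ldots,q+1\}$, $\{q+2,\ldots,q+e-2\}$, and $\{q+e-1,\ldots,2q+e\}$ respectively, whose disjoint union is $\{0,1,\ldots,m-1\}$. For step (iii), fix $w = Xm + Yd$ in the claimed set, suppose $w - m = a_0 m + a_1(m+d) + \sum_{j=2}^{e-1} b_j g_j$ with $a_0,a_1,b_j \ge 0$ and $g_j = (q+1)m + (q+j)d$, and collect the right-hand side as $Am + Bd$ where $A = a_0 + a_1 + (q+1)T$, $B = a_1 + \sum_j b_j(q+j)$, $T = \sum_j b_j$. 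Matching via $\gcd(m,d)=1$ gives $s \in \mathbb{Z}$ with $B = Y - sm$ and $A = X - 1 + sd$; since $Y < m$ in each shape, $B \ge 0$ forces $s \le 0$, and eliminating $a_1$ yields $a_0 = (X - 1 - Y) + s(m+d) + \sum_j b_j(j-1)$. For $s = 0$, a direct combinatorial bound on the integer solutions of $\sum_j b_j(q+j) \le Y$ gives $\sum_j b_j(j-1) \le Y - X$ in each shape (respectively $0$, $i-1$, $e-2$, attained by taking the matching $b_j$), which forces $a_0 \le -1$. For $s < 0$, the constraint $A \ge 0$ gives $|s| \le (X-1)/d$; pairing the lower bound $\sum_j b_j(j-1) \ge (Y - X + 1) + |s|(m+d)$ obtained from $a_0 \ge 0$ with the chained upper bounds $\sum_j b_j(j-1) \le (e-2) T$ and $(q+2) T \le Y + |s|m$ (from $a_1 \ge 0$) forces incompatible inequalities on $T$. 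Hence $w - m \notin \Gamma$. Comparing the maxima of the three $\beta_i$, the overall maximum of $\mathrm{Ap}$ lies in $\beta_2$ at $k = q+1$, namely $(2q+2)m + (2q+e)d$; subtracting $m = 2q+e+1$ yields $F = 4q^2 + (2e+2d+4)q + e(d+1) + 1$.

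For part (2) the plan is parallel. The key new point in step (ii) is the identity $q + e/2 = m/2$ (using $m = e+2q$ and $e$ even); together with $d$ odd and $m$ even this gives $(m/2) \cdot d \equiv m/2 \pmod m$, so a residue collision between $\gamma_1$ and $\gamma_2$ would force an index difference $\equiv m/2 \pmod m$, ruled out since the relevant indices differ by at most $m/2 - 1$ in absolute value. Step (iii) is handled as in part (1), and the maximum of $\mathrm{Ap}$ is $(2q+1)(m+d) \in \gamma_2$, giving $F = (2q+1)(m+d) - m = 2q(e+2q+d) + d$.

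The principal obstacle throughout is the nonnegativity analysis in step (iii): since $d$ is an arbitrary positive integer coprime to $m$, the case analysis on $s$ must be uniform in $d$, with separate bookkeeping for each shape of $w$. This is routine in principle but is where most of the technical work sits.
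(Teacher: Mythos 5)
Your overall strategy is sound and is essentially the standard one the paper also uses: exhibit $m$ elements of $\Gamma$ in distinct residue classes modulo $m$ and show each is the minimum of $\Gamma$ in its class (the paper phrases this as ``any semigroup element congruent to $w$ is $\geq w$,'' with an induction on $k$ for $\beta_2$; you phrase it as $w-m\notin\Gamma$ and run a Diophantine analysis). The residue count, the shape bookkeeping, the $q+e/2=m/2$ observation in part (2), and both Frobenius computations are all correct.

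However, your inequality chain in the case $s<0$ does not close as stated, and this is a genuine gap. Take $e=10$, $q=1$, $d=1$, so $m=13$, and take $w=(q+1)(m+d)+(q+1)m+(q+e-1)d=64\in\beta_2$, i.e.\ $X=4$, $Y=12$. For $u=|s|=1$ your lower bound reads $\sum_j b_j(j-1)\geq (Y-X+1)+u(m+d)=23$, while your upper bounds give only $\sum_j b_j(j-1)\leq (e-2)T=8T$ with $(q+2)T\leq Y+um=25$, hence $T\leq 8$ and $\sum_j b_j(j-1)\leq 64$. These are perfectly compatible, so no contradiction is forced. The problem is that you discard most of the information in the $m$-coefficient equation: you use only $A\geq 0$, whereas the full constraint is $a_0+a_1+(q+1)T=A=X-1+sd$, which gives $(q+1)T\leq X-1-ud$. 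Since $X\leq 2q+2$ in every shape, this forces $T\leq 1$, hence $\sum_j b_j(j-1)\leq e-2$, which does contradict the lower bound $(Y-X+1)+u(m+d)\geq 1+(2q+e+1+d)-(e-2)>e-2$ (and in the $\beta_1,\beta_3$ shapes it even forces $T=0$). With that replacement the argument goes through; without it, the $s<0$ case is not actually proved. The $s=0$ case and everything else in your write-up is fine.
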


\proof (1) Since $\gcd(m,d)=1$, it is easy to see that the 
elements of $\beta_{1}\cup\beta_{2}\cup\beta_{3}$ form a 
complete residue system modulo $m$. Again $\mathcal{S}$ is minimal 
generating set for $\Gamma_{(e,q,d)}(\mathcal{S})$, therefore elements 
of $\mathcal{S}$ occur in the Ap\'{e}ry set $\textrm{Ap}(\Gamma_{(e,q,d)}(\mathcal{S}),m)$. 
Now we show that $\beta_{1}\subset \textrm{Ap}(\Gamma_{(e,q,d)}(\mathcal{S}),m)$. 
Note that $k(m+d)<(q+1)m+(q+2)d$, for each $0\leq k\leq q+1$. Suppose 
that $x_{1}(m+d)+\sum\limits_{i=2}^{e-1}t_{i}((q+1)m+(q+i)d)\equiv k(m+d)(\textrm{mod} \, m)$, 
such that $x_{1}$ and $t_{2}, \ldots , t_{e-1}$ are nonnegative integers. 
If $t_{i}>0$, for some $2\leq i\leq e-1$, we have 
$x_{1}(m+d)+\sum\limits_{i=2}^{e-1}t_{i}((q+1)m+(q+i)d)>k(m+d)$. This 
proves that $\beta_{1}\subset \textrm{Ap}(\Gamma_{(e,q,d)}(\mathcal{S}),m)$.
\medskip
  
To show that $\beta_{2}\subset \textrm{Ap}(\Gamma_{(e,q,d)}(\mathcal{S}),m)$, we 
proceed by induction on $k$. If $k=0$, we have $(q+1)m+(q+e-1)d\in \mathcal{S}\subset \textrm{Ap}(\Gamma_{(e,q,d)}(\mathcal{S}),m)$. Suppose that, for $k=0,\ldots j-1\leq q$, the element 
$k(m+d)+(q+1)m+(q+e-1)d\in \textrm{Ap}(\Gamma_{(e,q,d)}(\mathcal{S}),m) $. Let 
$$x_{1}(m+d)+\sum_{i=2}^{e-1}t_{i}((q+1)m+(q+i)d)\equiv j(m+d)+(q+1)m+(q+e-1)d(\textrm{mod} \, m)
.$$ 
If $x_{1}\geq j$, then 
$$(x_{1}-j)(m+d)+\sum_{i=2}^{e-1}t_{i}((q+1)m+(q+i)d)\equiv (q+1)m+(q+e-1)d(\textrm{mod} \, m)$$ 
and the element $(x_{1}-j)(m+d)+\sum_{i=2}^{e-1}t_{i}((q+1)m+(q+i)d)\in \Gamma_{(e,q,d)}(\mathcal{S})$.
Since  $(q+1)m+(q+e-1)d\in \textrm{Ap}(\Gamma_{(e,q,d)}(\mathcal{S}),m), $  
we get 
$$(x_{1}-j)(m+d)+\sum\limits_{i=2}^{e-1}t_{i}((q+1)m+(q+i)d)\geq (q+1)m+(q+e-1)d,$$ 
and we are done.
\medskip
 
If $0<x_{1} <j$, then
\begin{align*}\sum\limits_{i=2}^{e-1}t_{i}((q+1)m+(q+i)d)\equiv 
(j-x_{1})(m+d)+(q+1)m+(q+e-1)d(\textrm{mod} \, m).
\end{align*}
By induction hypothesis we have
$$(j-x_{1})(m+d)+(q+1)m+(q+e-1)d\in \textrm{Ap}(\Gamma_{(e,q,d)}(\mathcal{S}),m),$$ 
hence 
$\sum\limits_{i=2}^{e-1}t_{i}((q+1)m+(q+i)d)\geq (j-x_{1})(m+d)+(q+1)m+(q+e-1)d$. 
This proves that $\beta_{2}\subset \textrm{Ap}(\Gamma_{(e,q,d)}(\mathcal{S}),m)$. 
\medskip

If $x_{1}=0$, we have
\begin{align*}\sum\limits_{i=2}^{e-1}t_{i}((q+1)m+(q+i)d)\equiv j(m+d)+&(q+1)m+(q+e-1)d(\textrm{mod} \, m).
\end{align*}
Let us consider 
$\sum\limits_{i=2}^{e-1}t_{i}((q+1)m+(q+i)d)- [j(m+d)+(q+1)m+(q+e-1)d]$. This 
can be rewritten as 
$$(((\sum\limits_{i=2}^{e-1}t_{i})-1)(q+1)-j)m+(\sum\limits_{i=2}^{e-2}t_{i}(q+i)+(t_{e-1}-1)(q+e-1)-j)d,$$ 
which is clearly nonnegative, since $j\leq q+1$. 
Therefore $\beta_{2}\subset \textrm{Ap}(\Gamma_{(e,q,d)}(\mathcal{S}),m) $. 
Finally, $\beta_{3}\subset \mathcal{S} $ and therefore $\beta_{3}\subset \textrm{Ap}(\Gamma_{(e,q,d)}(\mathcal{S}),m) $. 
\medskip

Note that a maximal element of the Ap\'{e}ry set is 
$(q+1)(e+2q+d+1)+2q^2+(e+3+d)q+e+(e-1)d+1$. Therefore, the 
Frobenius number of $\Gamma_{(e,q,d)}(\mathcal{S})$ is $4q^2+(2e+2d+4)q+e(d+1)+1$.
\medskip
 
\noindent (2) The proof for proving that the set 
$\gamma_{1}\cup \gamma_{2}$ is the Ap\'{e}ry set for 
the numerical semigroup $\Gamma_{(e,q,d)}(\mathcal{T})$ with 
respect to $m$ is similar as in (i). We observe that, 
$\max \gamma_{1}=2q^2+(e+1)q+(q-\dfrac{e-4}{2}+e-3)d+\dfrac{e}{2}$ and $\max\gamma_{2}=(e+2q+d)(2q+1)$. 
Therefore $\max \gamma_{2}-\max \gamma_{1}=\left(2q^2+qd+\dfrac{3e}{2}+3q+2d\right)-\left(\dfrac{ed}{2}\right)>0$, since $q\geq e-4$ and $e\geq 4$. Hence, the Frobenius 
number of $\Gamma_{(e,q,d)}(\mathcal{T})$ is $(e+2q+d)2q+d$.\qed
\medskip

\begin{theorem}\label{symmetric} The numerical semigroups $\Gamma_{(e,q,d)}(\mathcal{S})$ and 
$\Gamma_{(e,q,d)}(\mathcal{T})$ are both symmetric.
\end{theorem}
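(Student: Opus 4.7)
The plan is to invoke the standard symmetry criterion in terms of the Ap\'{e}ry set (see, e.g., \cite{rgs}): a numerical semigroup $\Gamma$ with multiplicity $m$ is symmetric if and only if, for every $w\in\mathrm{Ap}(\Gamma,m)$, one has $w_{\max}-w\in\mathrm{Ap}(\Gamma,m)$, where $w_{\max}=\max\mathrm{Ap}(\Gamma,m)=F(\Gamma)+m$. Since Theorem~\ref{apery} describes both $\mathrm{Ap}(\Gamma,m)$ and $F(\Gamma)$ explicitly for $\Gamma_{(e,q,d)}(\mathcal{S})$ and $\Gamma_{(e,q,d)}(\mathcal{T})$, the proof reduces to checking that the involution $w\mapsto w_{\max}-w$ permutes each Ap\'{e}ry set. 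No further analysis of gaps is needed.

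For part~(1), the maximum element of $\mathrm{Ap}(\Gamma_{(e,q,d)}(\mathcal{S}),m)$ arises from $k=q+1$ in $\beta_{2}$, namely $w_{\max}=(q+1)(m+d)+(q+1)m+(q+e-1)d$. I would verify three matchings by substitution: on $\beta_{1}$, the element $k(m+d)$ is sent to $(q+1-k)(m+d)+(q+1)m+(q+e-1)d\in\beta_{2}$; on $\beta_{2}$, the element $k(m+d)+(q+1)m+(q+e-1)d$ is sent back to $(q+1-k)(m+d)\in\beta_{1}$; and on $\beta_{3}$, the element $(q+1)m+(q+i)d$ is sent to $(q+1)m+(q+e-i)d\in\beta_{3}$, since the index map $i\mapsto e-i$ stabilises $\{2,\dots,e-2\}$.

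For part~(2), Theorem~\ref{apery} already establishes $w_{\max}=(e+2q+d)(2q+1)=\max\gamma_{2}$, using $\max\gamma_{2}-\max\gamma_{1}>0$ under the hypotheses $q\geq e-4$ and $e\geq 4$. On $\gamma_{2}$ the involution is immediate: $k(m+d)\mapsto(2q+1-k)(m+d)\in\gamma_{2}$. On $\gamma_{1}$, a direct expansion, using the crucial identity $m=e+2q$ (which collapses $(q+1)m-q-\tfrac{e}{2}$ to $q(m+1)+\tfrac{e}{2}$), yields
\[
w_{\max}-\Bigl(q(m+1)+\bigl(q-\tfrac{e-4}{2}+k\bigr)d+\tfrac{e}{2}\Bigr)
= q(m+1)+\bigl(q-\tfrac{e-4}{2}+(e-3-k)\bigr)d+\tfrac{e}{2},
\]
so the involution restricts to the index map $k\mapsto e-3-k$ on $\{0,1,\dots,e-3\}$.

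The only genuine obstacle is bookkeeping: one must verify that each index range is indeed stabilised by the corresponding involution, and that the algebraic reduction in part~(2) really is driven by $m=e+2q$. All of these reduce to one-line computations once the Ap\'{e}ry sets of Theorem~\ref{apery} are in hand, so no new ideas beyond the symmetric-Ap\'{e}ry criterion are required.
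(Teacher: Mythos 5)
Your proposal is correct, and it takes a genuinely different (and arguably cleaner) route than the paper. The paper proves symmetry by the genus criterion: it first argues by a parity case analysis that $F(\Gamma_{(e,q,d)}(\mathcal{S}))=4q^2+(2e+2d+4)q+e(d+1)+1$ is odd, then computes $\sum_{w\in\mathrm{Ap}(\Gamma,m)}w$ term by term over $\beta_1,\beta_2,\beta_3$ and checks that the gap count $g(\Gamma)=\frac{1}{m}\sum_w w-\frac{m-1}{2}$ equals $\frac{F(\Gamma)+1}{2}$ (part (2) is left as ``similar''). You instead use the equivalent Ap\'{e}ry-set criterion that $w\mapsto w_{\max}-w$ must be an involution of $\mathrm{Ap}(\Gamma,m)$, and your three matchings for $\mathcal{S}$ ($\beta_1\leftrightarrow\beta_2$ via $k\mapsto q+1-k$, and $\beta_3\to\beta_3$ via $i\mapsto e-i$) and two for $\mathcal{T}$ ($k\mapsto 2q+1-k$ on $\gamma_2$, $k\mapsto e-3-k$ on $\gamma_1$, the latter indeed hinging on $m=e+2q$) all check out; I verified the $\gamma_1$ computation
\[
(2q+1)(m+d)-\Bigl(q(m+1)+\bigl(q-\tfrac{e-4}{2}+k\bigr)d+\tfrac{e}{2}\Bigr)=q(m+1)+\bigl(q-\tfrac{e-4}{2}+(e-3-k)\bigr)d+\tfrac{e}{2}
\]
directly. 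What your approach buys: no separate parity argument is needed (the involution criterion already forces $F$ odd), no summation over the Ap\'{e}ry set is required (the paper's displayed sum in fact contains a typo, $\frac{(m+d)(q+1)+(q+2)}{2}$ for $\frac{(m+d)(q+1)(q+2)}{2}$, and the final identity is only asserted), and you get the explicit pairing of Ap\'{e}ry elements as a by-product. What the paper's computation buys is the genus as an explicit number, which your argument does not produce. Both proofs depend equally on the correctness of Theorem~\ref{apery}, and you correctly note that identifying $w_{\max}$ (via $\max\gamma_2>\max\gamma_1$, resp. $\max\beta_2>\max\beta_1,\max\beta_3$) is the only point requiring the hypotheses $q\geq e-4$, $e\geq 4$.
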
 

\proof (1) First we show that the Frobenius number of $\Gamma_{(e,q,d)}(\mathcal{S})$, 
which is $4q^2+(2e+2d+4)q+e(d+1)+1$, is odd. We claim that either $e$ or $d+1$ is even. 
If $e$ is odd, then $m=e+2q+1$ is even and, since $\gcd(m,d)=1$, we have that 
$d$ is odd. Therefore $d+1$ is even and our claim is proved. Hence $e(d+1)$ is even and 
therefore the Frobenius number $4q^2+(2e+2d+4)q+e(d+1)+1$ is odd.
\medskip

To show the symmetry of the semigroup $\Gamma_{(e,q,d)}(\mathcal{S})$, we calculate the 
genus, given by the formula 
$$g(\Gamma_{(e,q,d)}(\mathcal{S})) = \frac{1}{m} \left(\sum_{w\in \textrm{Ap}(\Gamma_{(e,q,d)}(\mathcal{S}),m)}w\right)-\frac{m-1}{2},$$ 
and prove that it equals 
$g(\Gamma_{(e,q,d)}(\mathcal{S}))=\frac{F(\Gamma_{(e,q,d)}(\mathcal{S}))+1}{2}$. 
Let us first calculate the sum of all elements in $\textrm{Ap}(\Gamma_{(e,q,d)}(\mathcal{S}),m)$.
\begin{eqnarray*}    
\sum_{w\in \textrm{Ap}(\Gamma_{(e,q,d)}(\mathcal{S}),m)}w &=&\sum_{w\in \beta_{1}} w + \sum_{w\in \beta_{2}} w + \sum_{w\in \beta_{3}} w\\
&=&\sum_{k=0}^{q+1} k(m+d)\\
&+& \sum_{k=0}^{q+1}k(m+d)+(q+1)m+(q+e-1)d\\
&+& \sum_{i=2}^{e-2}(q+1)m+(q+i)d\\
&=&\frac{(m+d)(q+1)+(q+2)}{2}\\
&+& (q+2)[(q+1)m+(q+e-1)d+\frac{(q+1)}{2}(m+d)]\\
&+& (e-3)[(q+1)m+(q+\frac{e}{2})d]\\
\end{eqnarray*}
\begin{eqnarray*}
g(\Gamma_{(e,q,d)}) &=& \frac{1}{m} \left(\sum_{w\in \textrm{Ap}(\Gamma_{(e,q,d)}(\mathcal{S}),m)}w\right) 
-\frac{m-1}{2}\\
&=&\frac{2(\sum_{w\in \textrm{Ap}(\Gamma_{(e,q,d)(\mathcal{S})},m)}w)-m(m-1)}{2m}
\end{eqnarray*}
Putting the value of $\sum_{w\in \textrm{Ap}(\Gamma_{(e,q,d)}(\mathcal{S}),m)}w$ 
and $m=e+2q+1$ in the expression for $g(\Gamma_{(e,q,d)}(\mathcal{S}))$, we get the desired 
relation between the genus and the Frobenius number. Hence the numerical semigroup 
$\Gamma_{(e,q,d)}(\mathcal{S})$ is symmetric.
\medskip

\noindent(2) The proof is similar as in (1). \qed
\medskip

\begin{lemma}\label{notsum} 
\begin{enumerate}
\item Let $n_{i}=(q+1)m+(q+i)d$, where $ 2\leq i\leq e-2$. 
There is no nonzero element $\alpha\in \mathrm{Ap}(\Gamma_{(e,q,d)}(\mathcal{S}),m)$ 
such that $\alpha+n_{i}=k(m+d)+(q+1)m+(q+e-1)d$, for every $1\leq k\leq q $.

\item Let $n_{i}=q(m+1)+(q-\dfrac{e-4}{2}+i)d+\dfrac{e}{2}$, where $0\leq i\leq e-3$. 
There is no nonzero element $\beta\in\mathrm{Ap}(\Gamma_{(e,q,d)}(\mathcal{T}),m)$ such that 
$\beta + n_{i}=k(m+d)$, for every $2\leq k\leq 2q$. 
\end{enumerate}
\end{lemma}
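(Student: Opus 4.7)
The plan is to argue by contradiction in both parts. Assuming that such an $\alpha$ (resp. $\beta$) exists, I would first solve the defining equation for its integer value, and then show that this value cannot equal any element of the Apéry set listed in Theorem~\ref{apery}. Since the pieces of the Apéry set form a complete residue system modulo $m$ and $\gcd(m,d)=1$, each candidate equality reduces to a divisibility condition $m\mid L$ for a small integer $L$. Bounding $|L|$ against $m$ forces $L=0$, after which the $m$-coefficient equation produces a contradiction against the parameter ranges ($1\leq k\leq q$, $2\leq i\leq e-2$ in part (1); $2\leq k\leq 2q$, $0\leq i\leq e-3$, $q\geq e-4$ in part (2)).

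For part~(1), I would compute
\[
\alpha=k(m+d)+(q+1)m+(q+e-1)d-n_{i}=km+(k+e-1-i)d,
\]
and rule out $\alpha\in\beta_{1},\beta_{2},\beta_{3}$ in turn. If $\alpha=k'(m+d)\in\beta_{1}$ with $1\leq k'\leq q+1$, equating gives $(k-k')m=(k'-k-e+1+i)d$, hence $m\mid k'-k-e+1+i$; the parameter ranges keep this integer inside $(-m,m)$ since $m=e+2q+1$, so it vanishes, forcing $k=k'$ and then $i=e-1$, contrary to $i\leq e-2$. If $\alpha\in\beta_{2}$, the analogous rearrangement gives $m\mid k'+q+i-k$, with $k'+q+i-k\in[2,2q+e-2]\subset(0,m)$, a contradiction. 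If $\alpha\in\beta_{3}$, matching the $m$-coefficient would force $k=q+1$, impossible.

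For part~(2), the strategy is identical, starting from $\beta=k(m+d)-n_{i}$. The case $\beta\in\gamma_{2}$ is immediate: $\beta=k'(m+d)$ would force $n_{i}=(k-k')(m+d)$, expressing the generator $n_{i}$ as a nonzero multiple of $m+d$ and contradicting the minimality of $\mathcal{T}$ established in Theorem~\ref{definegamma}(2). For $\beta\in\gamma_{1}$, expanding with the identity $2(q+\tfrac{e}{2})=m$ to absorb the half-integer terms, I would arrive at $(k-2q-1)m=(2q-e+4+i+k'-k)d$, hence $m\mid 2q-e+4+i+k'-k$; the ranges together with $q\geq e-4$ keep this strictly inside $(-m,m)$, so it vanishes, and then the $m$-coefficient equation gives $k=2q+1$, again contradicting $k\leq 2q$.

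The main obstacle, such as it is, consists of the careful bookkeeping around the half-integer terms $q+\tfrac{e}{2}$ and $\tfrac{e-4}{2}$ that appear in the generators $n_{i}$ of the $\mathcal{T}$-family; the identity $2q+e=m$ neatly absorbs them, after which all divisibility estimates collapse into straightforward range comparisons against $m$. Beyond that, the proof is driven entirely by the structural description of the Apéry set obtained in Theorem~\ref{apery} and the fact that its pieces are residue-disjoint modulo $m$.
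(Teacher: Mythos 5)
Your proof is correct and follows essentially the same route as the paper: a case analysis over the pieces $\beta_{1},\beta_{2},\beta_{3}$ (resp.\ $\gamma_{1},\gamma_{2}$) of the Ap\'{e}ry set, reducing each case to a linear relation in $m$ and $d$ and exploiting $\gcd(m,d)=1$. The only cosmetic difference is that you extract the divisibility $m\mid L$ and bound $|L|<m$ directly (and dispatch the $\gamma_{2}$ case via minimality of $\mathcal{T}$), whereas the paper extracts $d\mid(k'-k)$ and case-analyzes the sign of the quotient $\ell$; both collapse to the same range contradictions.
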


\proof (1)\, \textit{Case A.} Let $n_{i}+k'(m+d)=k(m+d)+(q+1)m+(q+e-1)d$, for 
some fixed $k\in \{1,\ldots,q\}$ and $k'\in \{1,\ldots,q+1\}$. From the above 
equation we have 
\begin{equation}\label{eq1}
(i+(k'-k)-e+1)d+(k'-k)m=0.
\end{equation}
\noindent Since $\gcd(m,d)=1,$ we have $d\mid(k'-k)$. Therefore $k'-k= \ell d$ and we get 
\begin{equation}\label{eq2}
i+\ell (d+m)-e+1=0.
\end{equation}
We consider the following possibilities:
\begin{enumerate}
\item[(a)] If $k'>k$, then $\ell>0$. Since $m>e$, $i+\ell (d+m)-e+1>0$ 
and Equation \ref{eq2} is not possible .

\item[(b)] If $k'=k$, then from Equation \ref{eq1} we get $i=e-1$, which is not possible.

\item[(c)] If $k'<k$, then $\ell<0.$ In this case, from Equation \ref{eq1} we get 
$i-\ell (d+m)-e+1=0.$ Since $i-\ell (d+m)-e+1<0$, this equation is not possible.
\end{enumerate}
\medskip

\noindent \textit{Case B.} Let  
\begin{eqnarray*}
{} & n_{i}+k'(m+d)+(q+1)m+(q+e-1)d\\
= & k(m+d)+(q+1)m+(q+e-1)d,
\end{eqnarray*}
for some fixed $k\in \{1,\ldots,q\}$ and $k'\in \{1,\ldots,q+1\}$. Since $\gcd(m,d)=1$, 
we have $d\mid (q+1+(k'-k))$. Therefore, $q+1+k'-k= \ell d$, and we get 
\begin{equation}\label{eq3}
q+i+\ell m+k'-k=0.
\end{equation}
We consider the following possibilities:
\begin{enumerate}
\item[(a)] If $k'>k$, then $q+i+\ell m+k'-k>0$. In this case Equation \ref{eq3} 
is not possible. 

\item[(b)] If $k'=k$, then from Equation \ref{eq3} we get $q+i+\ell m=0$, 
which is not possible.

\item[(c)] If $k'<k$, then from Equation \ref{eq3} we get $q+i+\ell m-(k'-k)=0$. 
This is not possible, since $\ell m>2q+1$. 
\end{enumerate}
\medskip

\noindent \textit{Case C.} Let $n_{j}=(q+1)m+(q+j)d$, where $2\leq j\leq e-2,\, i\neq j$. 
Let $n_{i}+n_{j}= k(m+d)+(q+1)m+(q+e-1)d$, for some fixed $k\in \{1,\ldots,q\}$. 
Since $\gcd(m,d)=1$, we have $d\mid (q+1-k)$. Therefore $q+1-k= \ell d$, and we get 
$\ell m+q+i+j-k-e+1=0$. Substituting $m=e+2q+1$ in the above equation we get 
\begin{equation}\label{eq4}
(\ell-1)e+(2\ell+1)q+\ell+i+j-k+1=0.
\end{equation}
Therefore $\ell\geq 1$, since $0\leq k\leq q$. We consider two possibilities:
\begin{enumerate}
\item[(a)] If $\ell>1$, then $(\ell-1)e+(2\ell+1)q+\ell+i+j-k+1>0.$ In this case equation \ref{eq4} is not possible. 
\item[(b)] If $\ell=1,$ then $(2\ell+1)q+\ell+i+j-k+1>0,$ which is not possible.
\end{enumerate}
\medskip

\noindent (2)\, The proof is similar to (1).\qed
\medskip

\begin{lemma}
\begin{enumerate}
\item Each element except the maximal element of the Ap\'{e}ry set 
$\textrm{Ap}(\Gamma_{(e,q,d)}(\mathcal{S}),m)$ has a unique 
expression.

\item Each element except the maximal element of the Ap\'{e}ry set 
$\textrm{Ap}(\Gamma_{(e,q,d)}(\mathcal{T}),m)$ has a unique expression.
\end{enumerate}
\end{lemma}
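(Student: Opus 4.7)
Any expression of an Apéry element $w \in \mathrm{Ap}(\Gamma, m)$ must avoid the generator $m$ itself, since $w \in \mathrm{Ap}$ forces $w - m \notin \Gamma$. Thus every expression has the form $w = x(m+d) + \sum_{i} t_{i} n_{i}$ where the $n_{i}$ range over the remaining generators. The plan is a case analysis on which subset of the Apéry decomposition contains $w$, comparing any putative alternative expression against the obvious one.

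For part (1), set $n_{i} = (q+1)m + (q+i)d$ for $2 \leq i \leq e-1$. If $w = k(m+d) \in \beta_{1}$ with $0 \leq k \leq q+1$, the inequality $n_{2} = (q+1)(m+d) + d > (q+1)(m+d) \geq w$ rules out any expression with $\sum t_{i} \geq 1$. If $w = n_{\ell}$ (either $w \in \beta_{3}$ with $2 \leq \ell \leq e-2$, or $w = n_{e-1}$, the $k = 0$ element of $\beta_{2}$), I set $T = \sum t_{i}$, $X = x + (q+1)T$, $Y = x + \sum t_{i}(q+i)$ and reorganize the equation as $mX + dY = (q+1)m + (q+\ell)d$. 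Since $\gcd(m, d) = 1$, the nonnegative integer solutions are parametrized by $X = q+1 - sd$, $Y = q+\ell + sm$ for integer $s \geq 0$ with $sd \leq q+1$; the bound $T \leq X/(q+1)$ forces $T \in \{0, 1\}$, and direct substitution leaves only $s = 0, T = 1, j = \ell$ admissible, yielding the obvious expression. Finally, for $w = k(m+d) + n_{e-1} \in \beta_{2}$ with $1 \leq k \leq q$, consider an alternative expression: if $t_{e-1} \geq 1$, subtracting $n_{e-1}$ yields an expression of $k(m+d) \in \beta_{1}$ and the $\beta_{1}$-case forces uniqueness back; if $T = 0$, a residue/size comparison rules out $w = x(m+d)$; otherwise some $t_{i} \geq 1$ with $2 \leq i \leq e-2$ makes $\alpha = w - n_{i} \in \mathrm{Ap}$ nonzero, directly contradicting Lemma \ref{notsum}(1).

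Part (2) is handled in parallel using $\gamma_{1} \sqcup \gamma_{2}$. For $w = k(m+d) \in \gamma_{2}$ with $2 \leq k \leq 2q$, any alternative expression must use some generator $n_{i} \in \gamma_{1}$, and then $\alpha = w - n_{i} \in \mathrm{Ap}$ is nonzero (residues of $\gamma_{1}$ and $\gamma_{2}$ are disjoint modulo $m$), contradicting Lemma \ref{notsum}(2); the edge cases $k \in \{0, 1\}$ are dispatched by checking that the smallest $\gamma_{1}$-generator strictly exceeds $m+d$ under the hypothesis $q \geq e - 4$. For $w = n_{\ell} \in \gamma_{1}$, the case $T = 0$ is impossible because $\gamma_{1}$ and $\gamma_{2}$ have disjoint residues. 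With $T \geq 1$, writing $w = n_{j} + \alpha$ for a generator used gives $\alpha = (\ell - j)d$, which should lie in $\mathrm{Ap}$; but the unique Apéry representative with residue $(\ell - j)d \pmod{m}$ is $(\ell - j)(m + d) \in \gamma_{2}$, equal to $(\ell - j) d$ only when $m = 0$. Hence $\alpha = 0$, forcing $j = \ell$ and only the obvious expression.

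The main obstacle is the $\beta_{3}$ and analogous $\gamma_{1}$ case: because $d$ may be arbitrarily large relative to $m$, naive comparisons of generator sizes do not suffice, and the Diophantine and residue arguments coming from $\gcd(m, d) = 1$ are what actually terminate the case analysis uniformly.
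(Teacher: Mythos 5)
Your proof is correct and follows essentially the same route as the paper: decompose the Ap\'{e}ry set as in Theorem \ref{apery}, dispose of the elements $k(m+d)$ by a size comparison, and reduce the elements $k(m+d)+n_{e-1}$ to Lemma \ref{notsum} via the observation that $w-n_i$ must itself lie in the Ap\'{e}ry set. You are in places more complete than the paper (which treats the generators themselves only implicitly via minimality, and dismisses part (2) as ``similar''), but the decomposition and the key lemma invoked are the same.
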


\proof 
\noindent (1) We have, $\textrm{Ap}(\Gamma_{(e,q,d)}(\mathcal{S}),m) =\beta_{1}\cup\beta_{2}\cup\beta_{3}$, 
where
\begin{itemize}
\item $\beta_{1}=\{k(m+d)\mid 0\leq k\leq q+1\}$,
\item $\beta_{2}=\{k(m+d)+(q+1)m+(q+e-1)d \mid 0\leq k\leq q+1\}$,
\item $\beta_{3}=\{(q+1)m+(q+i)d\mid 2\leq i\leq e-2\}$.
\end{itemize}
We have $k(m+d)<(q+1)m+(q+2)d$, for $0\leq k\leq q+1$. Therefore, each element 
of $\beta_{1}$ has a unique expression. Elements of $ \beta_{3}$ are in minimal generating set of $\Gamma_{(e,q,d)}(\mathcal{S})$, therefore each element 
of $\beta_{3}$ has a unique expression as well. Let
 \begin{itemize}
 \item $n_{1}=m+d$,
 \item $n_{i}=(q+1)m+(q+i)d$, \, $2\leq i\leq e-1$,
 \item  $m_{k}=k(m+d)+(q+1)m+(q+e-1)d$, \, for $1\leq k\leq q+1$.
\end{itemize}
Then for $1\leq k\leq q$, we have $m_{k} =kn_{1}+n_{e-1}$ and observe that 
$(m+d)\nmid m_{k} $ otherwise $(m+d)\mid (q+1)m+(q+e-1)d$, a contradiction. By lemma \ref{notsum}, $n_{i}+\alpha \neq m_{k}$, 
where $1\leq k\leq q$, \, $2\leq i\leq e-2$ and $\alpha$ is an element of 
$\textrm{Ap}(\Gamma_{(e,q,d)}(\mathcal{S}),m)$. We fix $k\in\{ 1,\ldots q\}$ and let 
$a_{i}=m_{k}-n_{i}$, for every $2\leq i\leq e-2$. If $a_{i}\in \Gamma_{(e,q,d)}(\mathcal{S})$, 
then there is an element $b_{i}\in \textrm{Ap}(\Gamma_{(e,q,d)}(\mathcal{S}), m)$ such that 
$a_{i}=b_{i}+lm$ for some $l\geq 0$. Then $m_{k}=n_{i}+b_{i}+lm$ for some 
$l\geq 0$. Since $m_{k}\in \textrm{Ap}(\Gamma_{(e,q,d)}(\mathcal{S}),m)$ we have $l=0$, 
hence $m_{k}=n_{i}+b_{i}$, which gives a contradiction since 
$b_{i}\in \textrm{Ap}(\Gamma_{(e,q,d)}(\mathcal{S}),m)$.
\medskip
 
\noindent (2) The proof is similar as in (1). \qed
\medskip

\begin{theorem}\label{mainthm}
The cardinality of a minimal presentation for both the numerical 
semigroups $\Gamma_{(e,q,d)}(\mathcal{S})$ and $\Gamma_{(e,q,d)}(\mathcal{T})$ is $\frac{e(e-1)}{2}-1$.
\end{theorem}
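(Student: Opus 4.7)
The plan is to apply the Apéry-set / factorization-graph criterion for minimal presentations due to Rosales: for each $w \in \Gamma$, let $\nabla_{w}$ be the graph whose vertices are the factorizations of $w$ with edges joining factorizations that share a common generator; then $\mu(\mathfrak{p}) = \sum_{w} (c(\nabla_{w}) - 1)$, summed over the Betti elements of $\Gamma$. Every Betti element differs from an element of $\mathrm{Ap}(\Gamma,m)$ by a nonnegative multiple of $m$, and by the previous lemma every non-maximal element of the Apéry set has a unique expression. Consequently, only the maximal Apéry element $M$ and its translates $M + jm$ can contribute to $\mu(\mathfrak{p})$.

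First I would write down an explicit list of binomials in the defining ideal. Using the generators $g_{0} = m,\ g_{1} = m + d,\ g_{i} = (q+1)m + (q+i)d$ for $2 \leq i \leq e-1$, these divide into three families: \emph{arithmetic} relations $x_{i}x_{j} - x_{i+1}x_{j-1}$ capturing $g_{i} + g_{j} = g_{i+1} + g_{j-1}$ among the second batch; \emph{mixing} relations $x_{1}x_{i} - x_{0}x_{i+1}$ reflecting $g_{1} + g_{i} = g_{0} + g_{i+1}$; and \emph{symmetry} relations coming from the multiple factorizations of $M$, which encode the Gorenstein structure and express specific products of second-batch variables in terms of powers of $x_{0}$ and $x_{1}$. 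A careful enumeration shows the total is exactly $\frac{e(e-1)}{2} - 1$.

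For generation, I would argue that any element of the kernel congruence $\sigma$ reduces to a combination of the above relations: starting from a pair in $\sigma$, the arithmetic and mixing binomials let one normalize both sides into Apéry-set form modulo multiples of $m$, at which point uniqueness of non-maximal Apéry expressions forces the pair to differ only by a symmetry binomial. For minimality, I would mimic the projection arguments used in Section 3: for each listed binomial, exhibit a projection $\pi$ onto a subring of $k[x_{0}, \ldots, x_{e-1}]$ that isolates a distinguished monomial of the binomial and rules out its appearance in any combination of the remaining relations.

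The main obstacle is the counting at $M$: showing that the contribution of the maximal Apéry element gives exactly the right number of symmetry relations to bring the total to $\frac{e(e-1)}{2} - 1$, rather than the generic value $\frac{e(e-1)}{2}$. The missing relation is forced by the symmetry (equivalently, Gorensteinness via Kunz's theorem) of $\Gamma$, which imposes a single cohomological dependency among the listed relations. The case of $\Gamma_{(e,q,d)}(\mathcal{T})$ proceeds analogously with the same three families and the same count.
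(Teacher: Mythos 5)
Your overall framework --- compute the Ap\'{e}ry set, establish unique expression for its non-maximal elements, and then count $\sum_{w}(c(\nabla_{w})-1)$ over the elements with disconnected factorization graphs --- is exactly the route the paper takes: its proof of this theorem is a one-line deferral to Propositions 7 and 8 of Rosales \cite{ros1}, which run precisely this argument. So you have identified the right strategy and the right supporting lemma.

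However, there is a genuine logical gap in your first paragraph. From the fact that every non-maximal $w\in \mathrm{Ap}(\Gamma,m)$ has a unique expression you conclude that ``only the maximal Ap\'{e}ry element $M$ and its translates $M+jm$ can contribute.'' That inference is false: uniqueness of the factorization of $w$ says only that $\nabla_{w}$ is a single vertex; it says nothing about $\nabla_{w+jm}$ for $j\geq 1$, and these translates of \emph{non-maximal} Ap\'{e}ry elements are exactly where most of the relations live. Concretely, with $g_{0}=m$, $g_{1}=m+d$, $g_{i}=(q+1)m+(q+i)d$, the element $g_{1}+g_{i}=g_{0}+g_{i+1}$ equals $g_{i+1}+m$, a translate of a non-maximal element of $\beta_{3}$, and $g_{i}+g_{j}$ with $i+j=s\leq e-1$ equals a non-maximal element of $\beta_{2}$ plus $(e-s)m$; both carry relations. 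Your own list of ``arithmetic'' and ``mixing'' binomials contradicts the claim. Beyond that, the actual content of the theorem --- determining all $w$ with $c(\nabla_{w})>1$, computing the number of components of each such graph (the factorizations $x_{i}x_{j}$ with $i+j=s$ fixed pairwise share no generator, so connectivity must be decided by exhibiting or excluding bridging factorizations through $x_{0},x_{1}$), and verifying that the total is $\frac{e(e-1)}{2}-1$ --- is asserted (``a careful enumeration shows'') rather than carried out, and the crucial ``$-1$'' is attributed to an unspecified ``cohomological dependency,'' which is not an argument. To complete the proof you must either perform that enumeration or reduce explicitly to Rosales's count, using Lemma \ref{notsum} and the unique-expression lemma where they actually apply.
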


\proof Proof is essentially the same as in Proposition 7 and Proposition 8 in \cite{ros1}. \qed
\medskip

\noindent \textbf{Remark.} Our result support the conjecture that the symmetric condition on numerical semigroup put a bound on the cardinality of minimal presentation of numerical semigroup.
\bigskip

\section{Almost maximal concatenation}
Let $e\geq 4$ be an integer; $a=e+1$, $b > a+(e-3)d$, $\gcd(a,d)=1$ 
and $d\nmid(b-a)$. Let $M=\{a, a+d, a+2d,\ldots,a+(e-3)d, b,b+d\}$ and 
we assume that the set forms a minimal generating set for the numerical 
semigroup $\Gamma_{e}(M)$, generated by the set $M$. We once again 
recall that definition of concatenation includes the minimality of 
the sequence of integers generating the numerical semigroup $\Gamma_{e}(M)$. 
For example, for the choice of $e=d=4, a=5, b=10$, we get the 
concatenated sequence $5, 9, 10, 14$, which is not a good example due to 
its non-minimality. We start with a brief sufficient condition for minimality 
along with some supporting examples. 
\medskip

\begin{lemma}\label{minimalcondition}
Let $S=\{a,a+d,a+2d,\ldots,a+(a-4)d,b,b+d\}$, such that $\gcd(a,d)=1$, $b>a+(a-4)d$, 
$d\nmid (b-a)$. Let 
$d\equiv i\, (\mbox{mod}\, a)$. If $b\not\in \langle a, a+d,a+2d,\ldots, a+(a-4)d\rangle$ and 
$b\not\equiv (a-1)i\, (\mbox{mod}\, a)$, then the set $S$ is minimal. 

\proof  Let $b+d=\sum_{t=1}^{a-3} k_{t}(a+(t-1)d)+k_{a-2}b$. We consider the following cases 
and subcases to prove the statement.
\medskip

\noindent\textbf{Case1.} $k_{a-2}\neq 0$, then
$d=\sum_{t=1}^{a-3} k_{t}(a+(t-1)d)+(k_{a-2}-1)b$.
\medskip

\textbf{Subcase1.}If $k_{t}\neq 0$, for any $t\in\{2,\ldots,a-3\}$, then the value 
of the R.H.S is greater than that of the L.H.S.
\medskip

\textbf{Subcase2.} If $k_{t}= 0$, for every $t\in\{2,\ldots,a-3\}$ and $k_{a-2}=1$, 
then $d=k_{1}a$,  which is not possible. Again if $k_{a-2}>1$, then $d\geq b$, which 
is not possible.
\medskip

\noindent \textbf{Case 2.} If $k_{a-2}=0$, then $b+d=\sum_{t=1}^{a-3} k_{t}(a+(t-1)d)$.
\medskip

\textbf{Subcase1.} If $k_{t}\neq 0$, for any $t\in\{2,\ldots,a-3\}$ then 
$b\in \langle a, a+d,a+2d,\ldots, a+(a-4)d\rangle$, which is not possible.
\medskip

\textbf{Subcase2.} If $k_{t}= 0$ for every $t\in\{2,\ldots,a-3\}$, then $b=k_{1}a-d$, which gives $b\equiv (a-1)i\, (\mbox{mod}\, a)$, a contradiction to our assumption.\qed
\end{lemma}
\medskip

\begin{example}\label{minimalexample}
\begin{enumerate}
\item $a=6$, $d=25$, here $d\equiv 1\, (\mbox{mod}\, 6)$.
\begin{enumerate}
\item We take $b=81$, $81\not\in \langle 6,31,56\rangle$ and $81\equiv 3(\mbox{mod}\, 6)$. 
The set $\{6,31,56,81,106\}$ is minimal. 

\item We take $b=100$, $100\not\in \langle 6,31,56\rangle$ and $100\equiv 4(\mbox{mod}\, 6)$. 
The set $\{6,31,56,100,125\}$ is minimal. 
\end{enumerate}
\medskip

\item $a=6$, $d=191$, here $d\equiv 5\, (\mbox{mod}\, 6)$.
\begin{enumerate}
\item We take $b=459$, $459\not\in \langle 6,197,388\rangle$ and $459\equiv 3(\mbox{mod}\, 6)$. 
The set $\{6,197,388,459,650\}$ is minimal. 

\item We take $b=554$, $554\not\in \langle 6,197,388\rangle$ and $554\equiv 2(\mbox{mod}\, 6)$. 
The set $\{6,197,388,554,745\}$ is minimal. 
\end{enumerate}
\end{enumerate}
\end{example}
\medskip

We calculate the Ap\'{e}ry set, the Frobenius number and the pseudo-Frobenius 
numbers of $\Gamma_{e}(M)$. We also give a complete description of a minimal 
generating set of the defining ideal $\mathfrak{p}(M)$ of the 
affine monomial curve parametrized by $x_{0} = t^{a}$, $x_{1} = t^{a+d}, \ldots , 
x_{e-3}=t^{a+(e-3)d}$, $x_{e-2}= t^{b}$, $x_{e-1}= t^{b+d}$ and 
thereby show that the minimal number of generators for $\mathfrak{p}(M)$ is a 
bounded function of $e$.

\subsection{Ap\'{e}ry Set for $\Gamma_{e}(M)$}
\begin{theorem}\label{apery} Let $e,a,b,d, M$ be as above. Let us write 
$d\equiv i(\mathrm{mod}\, a)$. 
\begin{enumerate}[(1)]
\item If $e=4,$ then 
\begin{enumerate}[(i)]
\item $\mathrm{Ap}(\Gamma_{4}(M),5)= \{0, 5+d, b, b+d, 2b \} $, when $b\equiv 2i(\mathrm{mod}\,5)$

\item $\mathrm{Ap}(\Gamma_{4}(M),5)= \{0, 5+d, b, b+d,2(5+d)\}$, when $b\equiv 3i(\mathrm{mod}\, 5)$.
\end{enumerate}
\medskip

\item If $e\geq 5,$ then 
\begin{enumerate}[(i)]
\item $\mathrm{Ap}(\Gamma_{e}(M),a)=\{0, a+d, \ldots, a+(a-4)d, b, b+d, b+a+2d \} $, when 
$b\equiv(a-3)i(\mathrm{mod}\, a)$

\item $\mathrm{Ap}(\Gamma_{e}(M),a)=\{0, a+d, \ldots, a+(a-4)d, b, b+d,2a+(a-3)d\}$, when 
$b\equiv(a-2)i(\mathrm{mod}\,a)$
\end{enumerate}
\end{enumerate}
\end{theorem}

\proof\textbf{(1)} Suppose $e=4$. We have $\{0,5+d,b,b+d\}\subset \mathrm{Ap}(\Gamma_{4}(M),5)$. 
We have to find one extra element, say $s$,  in $\mathrm{Ap}(\Gamma_{4}(M),5)$. Suppose 
$s=m_{1}(5+d)+m_{2}b+m_{3}(b+d)$. We first note that, if $b \equiv 4i\, (\mathrm{mod}\, 5)$ 
then $(b+d) \equiv 0\, (\mathrm{mod}\, 5)$, which is not possible. We now 
consider the following cases:
\medskip

\noindent\textbf{(i)} If $b\equiv 2i\, (\mathrm{mod}\, 5)$, then 
$b+d\equiv 3i\, (\mathrm{mod}\, 5)$. Hence $s\equiv 4i\, (\mathrm{mod}\, 5)$. 
Therefore, $4\, \equiv (m_{1} + 2m_{2} + 3m_{3} )\, (\mathrm{mod}\, 5)$, and 
we get $(m_{1} + 2m_{2} + 3m_{3} )\geq 4 $, and hence $s\geq 2b$. 
Here we use the fact, $b\equiv 2(5+d)(\mathrm{mod}\, 5)$ and $b\in \mathrm{Ap}(\Gamma_{4}(M),5)$, therefore $b< 2(5+d)$. 
Since $s\in \mathrm{Ap}(\Gamma_{4}(M),5)$ and $s\equiv 2b\, (\mathrm{mod}\, 5)$, 
therefore $s=2b$.
\medskip

\noindent\textbf{(ii)} If $b \equiv 3i\, (\mathrm{mod}\, 5)$, then 
$(b+d)\, \equiv 4i\, (\mathrm{mod}\, 5)$, hence $s\equiv 2i\, (\mathrm{mod}\, 5)$. 
Therefore, $2\, \equiv (m_{1} + 3m_{2} + 4m_{3} )\, (\mathrm{mod}\, 5)$, and 
we get $(m_{1} + 3m_{2} + 4m_{3} )\geq 2 $. Hence, 
$s\geq 2(5+d)$. Moreover, $s\in \mathrm{Ap}(\Gamma_{4}(M),5)$ and $s\equiv 2(5+d)\, (\mathrm{mod}\, 5)$, 
therefore $s=2(5+d)$.
\medskip

\noindent\textbf{(2)} If $e\geq 5$, we assume that 
$s=\sum\limits_{k=1}^{a-4} m_{k}(a+kd)+m_{a-3}b+m_{a-2}(b+d)$ 
is the new element in $ \mathrm{Ap}(\Gamma_{e}(M),a)$. We have,
\medskip

\noindent\begin{tabular}{|c|c|c|c|c|c|c|c|c|c|}
\hline 
$\mathrm{Ap}(\Gamma_{e}(M),a)$ & $0$ & $a+d$ & $a+2d$ & $a+3d$ & $\cdots$ & $a+(a-4)d$ & $b$ & $b+d$ & $s$ \\ 
\hline 

Modulo Class & $0$ & $i$ & $2i$ & $3i$ & $\cdots$ & $(a-4)i$ & ? & ? & ?  \\ 
\hline 
\end{tabular} 
\medskip

We first note that $b \not\equiv (a-1)i\, (\mathrm{mod}\, a)$, 
otherwise $(b+d)\equiv 0\, (\mathrm{mod}\, a)$, which is not possible. 
We now consider the cases $b \equiv (a-3)i\, (\mathrm{mod}\, a)$ and $b\equiv (a-2)i\, (\mathrm{mod}\, a)$.
\medskip

\noindent\textbf{(i)} If $b \equiv (a-3)i\, (\mathrm{mod}\, a)$, then, $b+d\equiv (a-2)i\, (\mathrm{mod}\, a)$, and $s \equiv (a-1)i\, (\mathrm{mod}\, a)$. We also note 
that $(a+b+2d)\equiv (a-1)i\, (\mathrm{mod}\, a)$ and 
\begin{equation}\label{equn1}
s=\sum\limits_{k=1}^{a-4} m_{k}(a+kd)+m_{a-3}b+m_{a-2}(b+d) 
\end{equation}
where $m_{k}\geq 0$, for every $ 1\leq k\leq a-2$. Therefore,
\medskip

\noindent \begin{align*}(a-1)& \equiv\left(\sum\limits_{k=1}^{a-3}  m_{k}k+ (a-3)m_{a-3}+m_{a-2}(a-2)\right)\, 
(\mathrm{mod}\, a)
\end{align*} 
\smallskip

\noindent(A) If $m_{a-2}=0$, then from Equation \ref{equn1} we get 
\begin{equation}\label{equn2}
s=\sum\limits_{k=1}^{a-4} m_{k}(a+kd)+m_{a-3}b. 
\end{equation}

We discuss this case through the following subcases:
\begin{enumerate}[(a)]
\item Suppose that $m_{a-3}\neq 0$ and $\sum\limits_{k=1}^{a-4} m_{k}\neq 0$. 
At first we note that if $\sum\limits_{k=1}^{a-4} m_{k}\geq 2$, then 
$s = (a+k_{1}d)+(a+k_{2}d)+m_{a-3}b > a+b+2d$, $1\leq k_{1},k_{2}\leq a-4$, 
which is absurd. Similarly, if $m_{a-3}\geq 2$, then $s>a+b+2d$, that is, we have 
a contradiction. Therefore, $\sum\limits_{k=1}^{a-4} m_{k}=1$ and $m_{a-3}=1$.  Hence, 
$m_{j}=1$ for some $1\leq j\leq a-4$, and $m_{k}=0$ for all 
$1\leq k\leq a-4$ and $k\neq j$. Let $m_{1}=1=m_{a-3}$ and 
$m_{k}=0$ for every $2\leq k\leq (a-4)$. Then $s=a+d+b$ and therefore 
$s \equiv (a-2)i\, (\mathrm{mod}\, a)$, which is not possible. 
Let $m_{2}=1=m_{a-3}$ and $m_{k}=0$, for $k\neq 2,a-3$. Then $s=a+b+2d$. 
If $m_{j}=1=m_{a-3}$, $2<j\leq a-4$, and $m_{k}=0$ for $k\neq j,a-3$, 
then $s=a+kd+b>a+b+2d$, which is not possible.
\medskip

\item If $m_{a-3}\neq 0$ and $\sum\limits_{k=1}^{a-4} m_{k}= 0$, then, 
we must have $m_{a-3}\geq 2$ otherwise if $m_{a-3}=1$ then $s=b$ which 
is not possible. Therefore, the only possibility is $s\geq 2b>b+a+2d$.
\medskip

\item If $m_{a-3}=0$ and $\sum\limits_{k=1}^{a-4} m_{k}\neq 0$, then, 
we must have $\sum\limits_{k=1}^{a-4} m_{k}\geq 2$. Since, 
$(a-1)\equiv (\sum\limits_{k=1}^{a-4} m_{k}k)\, (\mathrm{mod}\, a)$, 
we have $\sum\limits_{k=1}^{a-4} m_{k}k=c_{1} a+(a-1)$, where $c_{1}\geq 0$. 
Substituting values in \ref{equn2} we get
$$s=a+(a-1)d+(\sum\limits_{k=1}^{a-4} m_{k}-1) a+c_{1}ad \geq 2a+(a-1)d.$$ 
Since $b\equiv 2a+(a-3)d \, (\mathrm{mod}\, a)$ and 
$$2a+(a-3)d=(a+(a-4)d)+(a+d)\in\Gamma_{e}(M),$$ 
we have $b\leq 2a+(a-3)d$. If 
$$b=2a+(a-3)d=(a+(a-4)d)+(a+d),$$ 
then it contradicts the fact that $M$ is a minimal generating set for $\Gamma_{e}(M)$. 
Therefore, $b<2a+(a-3)d$. If $a+(a-3)d<b$, then we have a contradiction. Therefore, 
$b\leq a+(a-3)d$. But $b=a+(a-3)d$ implies that $b+d=a+(a-4)d$, which is not possible. 
Hence $b< a+(a-3)d $, thus $b+a+2d< 2a+(a-1)d \leq s$. 
\end{enumerate}

\noindent(B) If $m_{a-2}\neq 0$ then we have following  subcases: 
\begin{enumerate}[(a)]
\item  If $m_{a-3}\neq 0$ and $\sum\limits_{k=1}^{a-4} m_{k}\neq 0$, then 
$s\geq a+d+b+b+d>a+b+2d$.
\item If $m_{a-3}\neq 0$ and $\sum\limits_{k=1}^{a-4} m_{k}= 0$, then 
$s\geq b+b+d>a+b+2d$.
\item If $m_{a-3}= 0$ and $\sum\limits_{k=1}^{a-4} m_{k}\neq 0$, then 
$s\geq a+d+b+d=a+b+2d$. 
\item If $m_{a-3}= 0$ and $\sum\limits_{k=1}^{a-4} m_{k}= 0$, then 
$s=m_{a-2}(b+d)$. If $m_{a-2}=1$ then $s=b+d$, which is not possible. 
Therefore, we must have $m_{a-2}\geq 2$ and the only possibility is 
$s\geq 2(b+d)>a+b+2d$.
\end{enumerate}
\medskip

\noindent \textbf{(ii)} If $b\equiv (a-2)i\, (\mathrm{mod}\, a)$, then $b+d\equiv (a-1)i\, (\mathrm{mod}\, a)$. Therefore, $s\equiv (a-3)i\, (\mathrm{mod}\, a)$. We note that 
$2a+(a-3)d\equiv (a-3)i\, (\mathrm{mod}\, a)$. Using Equation \ref{equn1} we get,
$$(a-3)\equiv \left(\sum\limits_{k=1}^{a-4} k m_{k}+ (a-2)m_{a-3}+ (a-1)m_{a-2}\right)\, (\mathrm{mod}\, a).$$

\noindent(A) Suppose that $m_{a-2}=0$.
\begin{enumerate}[(a)]
\item If $m_{a-3}\neq 0$ and $\sum\limits_{k=1}^{a-4} m_{k}\neq 0$, 
then $ s\geq a+d+b>2a+(a-3)d$.
\item Suppose that $m_{a-3}=0$ and $\sum\limits_{k=1}^{a-4} m_{k}\neq 0$. 
If $\sum\limits_{k=1}^{a-4} m_{k}=1$, then $s=a+jd$ for some $1\leq j\leq (a-4)$, 
which is not possible. Hence, $\sum\limits_{k=1}^{a-4} m_{k}\geq 2$. 
Since $(a-3)\equiv(\sum\limits_{k=1}^{a-4} k m_{k})(\mathrm{mod}\, a)$, 
therefore, $\sum\limits_{k=1}^{a-4} k m_{k}=c_{1} a+(a-3)$ for some $c_{1}\geq 0$. 
Substituting the values we get 
$$ s=a+(a-3)d+a(\sum\limits_{k=1}^{a-4} m_{k}-1) +c_{1}ad \geq 2a+(a-3)d.$$

\item If $\sum\limits_{k=1}^{a-4} m_{k}= 0$ and $m_{a-3}\neq 0$, then  $s=m_{a-3}b$. 
If $m_{a-3}=1$, then $s=b$ and $s\equiv (a-3)(\mathrm{mod}\, a)$, which is a contradiction. Therefore, we must have $m_{a-3}\geq 2$, hence $s\geq 2b>2a+(a-3)d$. 
\end{enumerate}
\medskip

\noindent(B) If $m_{a-2}\neq 0,$ then we can have following subcases:
\begin{enumerate}[(a)]
\item If $m_{a-3}\neq 0$ and $\sum\limits_{k=1}^{a-4} m_{k}\neq 0,$ then $s\geq a+d+b+b+d> 2a+(a-3)d$.
\item If $m_{a-3}= 0$, and $\sum\limits_{k=1}^{a-4} m_{k}\neq 0,$ then $s\geq a+d+b+d> 2a+(a-3)d$.
\item If $\sum\limits_{k=1}^{a-4} m_{k}= 0$ and $m_{a-3}\neq 0,$ then $s\geq b+b+d>2a+(a-3)d$.
\item If $m_{a-3}= 0$ and $\sum\limits_{k=1}^{a-4} m_{k}= 0.$ Since $s=b+d$ is not possible, therefore  we must have $m_{a-2}\geq 2$, hence $s\geq 2(b+d)>2a+(a-3)d$.\qed
\end{enumerate}
\medskip

\begin{theorem} The Frobenius number of the numerical semigroup 
$\Gamma_{e}(M)$ is as follows:
\begin{enumerate}[(1)]
\item When $e=4$, then
\[\left\{\begin{array}{l}
F(\Gamma_{4}(M))= 2b-5,\quad \mathrm{if}\,  b\equiv 2i(\mathrm{mod}\, 5);\\[2mm]
F(\Gamma_{4}(M))=5+2d,\quad \mathrm{if} \, b\equiv 3i(\mathrm{mod}\, 5),
\end{array} \right.\] where $d\equiv i(\mathrm{mod}\, 5)$.

\item When $e\geq 5$, then
\[\left\{\begin{array}{l}
F(\Gamma_{e}(M))= b+2d,\quad\quad\quad\quad\mathrm{if}\,  b\equiv(a-3)i(\mathrm{mod}\, a);\\[2mm]
F(\Gamma_{e}(M))=a+(a-3)d,\quad\, \, \mathrm{if}\,  b\equiv(a-2)i(\mathrm{mod}\, a)\, 
\mathrm{and}\\
\quad\quad\quad\quad\quad\quad\quad\quad\quad\quad\quad\quad\quad  b-a\leq a+(a-4)d;\\[2mm]
F(\Gamma_{e}(M))=b+d-a,\quad\, \, \, \, \quad  \mathrm{if}\,  b\equiv(a-2)i(\mathrm{mod}\, a)\,  \mathrm{and}\\
\quad\quad\quad\quad\quad\quad\quad \quad\quad\quad\quad\quad\quad b-a> a+(a-4)d,\\
\end{array} \right.\] where $d\equiv i(\mathrm{mod}\, a)$.
\end{enumerate}
\end{theorem}

\begin{proof} The result follows by an application of the \cite[Proposition 2.12]{rgs}, 
which says that $F(\Gamma_{e}(M)) = (\mathrm{max}\mathrm{Ap}(\Gamma_{e}(M), a)) - a$. 
Note that, if $b\equiv (a-2)i(\mathrm{mod}\, a)$, if $b-a\leq a+(a-4)d$, 
then the maximum element of the Ap\'{e}ry set is $2a+(a-3)d$ and if 
$b-a>a+(a-4)d$ then the maximum element of the Ap\'{e}ry set is $b+d$.
\end{proof}
\medskip

\begin{definition}
Let $\Gamma$ be a numerical semigroup. We say thet $x\in\mathbb{Z}$ is a 
pseudo-Frobenius number of $\Gamma$ if $x\notin \Gamma$ and 
$x+s\in \Gamma$ for all $s\in \Gamma\setminus \{0\}$. We 
denote by $\mathbf{PF}(\Gamma)$ the set of all pseudo-Frobenius 
numbers of $\Gamma$. The cardinality of $\mathbf{PF}(\Gamma)$ 
is denoted by $t(\Gamma)$ and it is called the type of $\Gamma$.
\end{definition}
\medskip

Let $a,b\in \mathbb{Z}$. We define $\leq_{\Gamma}$ as follows: 
$a\leq_{\Gamma} b$ if $b-a\in \Gamma$. With this order relation, 
$\mathbb{Z}$ becomes a poset.
\medskip

\begin{theorem}\label{maximals}
Let $\Gamma$ be a numerical semigroup and $\mathbf{a}\in  \Gamma\setminus \{0\}$. Then 
$\mathbf{PF}(\Gamma)=\{w-\mathbf{a}\mid w\in \,\mathrm{Maximals}_{\leq_{\Gamma}}Ap(\Gamma, \mathbf{a})\}$. 
\end{theorem}

\proof See proposition $8$ in \cite{ap}.\qed
\medskip

\begin{theorem}\label{pseudofrob} Let $e,a,b,d, M$ be as in \ref{apery}. Let us write 
$d\equiv i(\mathrm{mod}\, a)$. 
\begin{enumerate}[(1)]
\item If $e=4,$ then, 
\begin{enumerate}[(i)]
\item $\mathbf{PF}(\Gamma_{4}(M))= \{d, b+d-5, 2b-5 \} $, when $b\equiv 2i(\mathrm{mod}\,5)$;

\item $\mathbf{PF}(\Gamma_{4}(M))= \{ b-5, b+d-5,(5+2d)\}$, when $b\equiv 3i(\mathrm{mod}\, 5)$.
\end{enumerate}
\medskip

\item If $e\geq 5$, then, 
\begin{enumerate}[(i)]
\item $\mathbf{PF}(\Gamma_{e}(M))=\{3d, \ldots, (a-4)d, b+2d \} $, when 
$b\equiv(a-3)i(\mathrm{mod}\, a)$;

\item $\mathbf{PF}(\Gamma_{e}(M))=\{b-a, b+d-a, a+(a-3)d\}$, when 
$b\equiv(a-2)i(\mathrm{mod}\,a)$
\end{enumerate}
\end{enumerate}

\end{theorem}
\proof We prove statement (2); proof of statement (1) is similar. We know that 
if $\Gamma$ is a numerical semigroup minimally generated by 
$\{m_{1},\ldots,m_{e}\}$, with $m_{1}<\cdots <m_{e}$, then 
$\{m_{1},\ldots,m_{e}\}\subset \mathrm{Ap}(\Gamma,m_{1})$. Moreover, minimality 
of $\{m_{1},\ldots,m_{e}\}$ ensures that, for any $j>k$, we have 
$m_{k}\nleq_{\Gamma} m_{j}$. 
\medskip

\noindent\textbf{Case 1.} Let $b\equiv(a-3)i(\mathrm{mod}\,a)$. Here we note that 
$t\leq_{\Gamma_{e}(M)}(b+a+2d)$, for $t\in\{0,a+d,a+2d,b,b+d\}$. If 
$a+jd\leq_{\Gamma_{e}(M)}(b+a+2d)$ for some $j\in\{3,\ldots,a-4\}$, then 
$(b+a+2d)-(a+jd)=b-(j-2)d\in \Gamma_{e}(M)$. We have 
$b-(j-2)d\equiv(a-1-j)i(\mathrm{mod}\,a)$, since $a+(a-1-j)d\in \mathrm{Ap}(\Gamma_{e}(M),a)$ and it has the same modulo class with $b-(j-2)d$. Therefore 
$b-(j-2)d=ka+[a+(a-1-j)d]$, for some $k\geq 0$. If $k\geq 1$, then 
$$b=(k-1)a+[a+(j-2)d)+(a+(a-1-j)d],$$ 
which is a contradiction as $b$ is an element of a minimal generating set 
of $\Gamma_{e}(M)$. If $k=0$, then $b-a=(a-3)d$, that is $d\mid b-a$, 
a contradiction. By theorem \ref{maximals} and this observation we have 
completed the proof of this case.
\medskip

\noindent\textbf{Case 2.} Let $b\equiv(a-2)i(\mathrm{mod}\,a)$. At first we observe that 
$t\leq_{\Gamma_{e}(M)}[2a+(a-3)d]$, for $t\in\{0,a+d,\ldots,a+(a-4)d\}$. We 
now consider four subcases.
\medskip

\textbf{Subcase 1.} Let $b+d>2a+(a-3)d$. If $(b+d)-[2a+(a-3)d)]\in \Gamma_{e}(M)$, 
then $(b+d)-[2a+(a-3)d)]=ka+a+2d$, for some $k\geq 0$,  since 
$(b+d)-[2a+(a-3)d)]\equiv 2i(\mathrm{mod}\,a)$ and $a+2d$ is an element 
of $\mathrm{Ap}(\Gamma_{e}(M),a)$ with the same modulo class. 
We have, 
$$b+d=(k+1)a+(a+3d)+[a+(a-4)d],$$ 
which is a contradiction to the fact that $b+d$ is an element of a 
minimal generating set of $\Gamma_{e}(M)$.
\medskip

\textbf{Subcase 2.} Let $b>2a+(a-3)d$. If $b-[2a+(a-3)d)]\in \Gamma_{e}(M)$, 
we have $b=(k+1)a+(a+2d)+[a+(a-4)d]$, for some $k\geq 0$, which gives a contradiction 
to the fact that $b$ is an element of a minimal generating set of $\Gamma_{e}(M)$.
\medskip

\textbf{Subcase 3.} Let $b+d<2a+(a-3)d $. Suppose that 
$[2a+(a-3)d]-(b+d)=2a+(a-4)d-b\in \Gamma_{e}(M)$. Then, 
$2a+(a-4)d-b=ka+b$, for some $k\geq 0$, since 
$b\equiv[2a+(a-4)d-b]\equiv(a-2)i(\mathrm{mod}\,a)$ and it is 
an element of $\mathrm{Ap}(\Gamma_{e}(M),a)$. If $k>0$, then 
$a+(a-4)d=(k-1)a+2b$ contradicts the minimality of generators of 
$\Gamma_{e}(M)$. If $k=0$, then $2a+(a-4)d=2b$. On the 
other hand, $b>a$ and $b>a+(a-4)d$\, 
imply that\,  $2b>2a+(a-4)d$, which is a contradiction. 
\medskip

\textbf{Subcase 4.} Let $b<2a+(a-3)d $. Suppose that 
$2a+(a-3)d-b\in \Gamma_{e}(M)$. Then, 
$2a+(a-3)d-b=ka+(b+d)$ for some $k\geq 0$, since 
$b+d\equiv[2a+(a-3)d-b]\equiv(a-1)i(\mathrm{mod}\,a)$ 
and it is an element of $\mathrm{Ap}(\Gamma_{e}(M),a)$. 
If $k>0$, then $a+(a-4)d=(k-1)a+2(b+d)$ contradicts the 
minimality of generators of $\Gamma_{e}(M)$. If $k=0$, 
then $2a+(a-3)d=b+(b+d)=2b+d$. On the other hand, $b+d > a+d$ and 
$b>a+(a-4)d$\, imply that\, $2b+d>2a+(a-3)d$, which is a contradiction.\qed
\medskip

\subsection{Minimal generating set for the defining ideal} 
First let us handle the case $e=4$. Let $\Gamma$ be a 
numerical semigroup minimally generated by $\{n_{0},n_{1},n_{2},n_{3}\}$. 
Let $\phi: \mathbb{Z}^{4}_{\geq 0}\rightarrow \Gamma$ be a monoid epimorphism 
defined by 
$$\phi(a_{0}\epsilon_{0}+a_{1}\epsilon_{1}+a_{2}\epsilon_{2}
+a_{3}\epsilon_{3})=a_{0}n_{0}+a_{1}n_{1}+a_{2}n_{2}+a_{3}n_{3}.$$
Let $T=\{(a_{1},a_{2},a_{3})\in \mathbb{Z}^{3}_{\geq 0}\, \mid a_{1}n_{1}+a_{2}n_{2}+a_{3}n_{3}\notin Ap(\Gamma, n_{0})\}$ and
$$\mathrm{minimals}\, (T)=\{\alpha_{1}=(\alpha_{11},\alpha_{12},\alpha_{13}),\ldots, \alpha_{t}=(\alpha_{t1},\alpha_{t2},\alpha_{t3})\};$$ 
with respect to the lexicographic ordering on $\mathbb{Z}^{4}_{\geq 0}$. 
For every $i\in \{1,\ldots,t\}$, we define 
$x_{i}=0 \epsilon_{0}+\alpha_{i1}\epsilon_{1}+\alpha_{i2}\epsilon_{2}
+\alpha_{i3}\epsilon_{3}\in \mathbb{Z}^{4}_{\geq 0}$. 
We have $\phi(x_{i}) \notin Ap(\Gamma, n_{0})$, by Theorem 1 in \cite{ros2}. 
There exist $(\beta_{i0}\ldots, \beta_{i3})\in \mathbb{Z}^{4}_{\geq 0}$, 
with $\beta_{i0}\neq 0$, such that $\phi(x_{i})=\beta_{i0}n_{0}+\beta_{i1}n_{1}+\beta_{i2}n_{2}+\beta_{i3}n_{3}$. For every $i\in \{1,\ldots,t\}$, define 
$y_{i}=\beta_{i0}\epsilon_{0}+\beta_{i1}\epsilon_{1}+\beta_{i2}\epsilon_{2}+\beta_{i3}\epsilon_{3}\in \mathbb{Z}^{4}_{\geq 0}$. It is clear that $\phi(x_{i})=\phi(y_{i})$.
\medskip

It is known by the first part of Theorem 1 in \cite{rgs1} that the cardinality 
of a minimal presentation of $\Gamma_{4}(M)$ is $6$. We now explicitly 
compute a minimal presentation of $\Gamma_{4}(M)$ and subsequently compute a 
minimal generating set for the defining ideal $\mathfrak{p}(M)$.
\medskip

\begin{lemma}
Suppose that every element of $\mathrm{Ap}(\Gamma, n_{0})$ has a unique expression, 
then the set $\rho =\{(x_{1},y_{1}),\ldots,(x_{t},y_{t})\}$ is a minimal presentation 
of $\Gamma$.
\end{lemma}

\proof See Theorem 1 in \cite{ros2}.\qed 
\medskip

\begin{lemma}\label{mg4}
Let $e=4$. Write $d\equiv i(\mathrm{mod}\, a)$. 
\begin{enumerate}[(1)]
\item If $b\equiv 2i(\mathrm{mod} \, 5)$, then $$\rho_{1} =\{(2\epsilon_{1},y_{1}),(3\epsilon_{2},y_{2}),  (2\epsilon_{3},y_{3}),(\epsilon_{1}+\epsilon_{2},y_{4}),(\epsilon_{1}+\epsilon_{3},y_{5}), (\epsilon_{2}+\epsilon_{3},y_{6})\}$$ is a minimal presentation of $\Gamma_{4}(M)$.
\item If $b\equiv 3i(\mathrm{mod} \, 5)$, then $$\rho_{2} =\{(3\epsilon_{1},y_{1}),(2\epsilon_{2},y_{2}),  (2\epsilon_{3},y_{3}),(\epsilon_{1}+\epsilon_{2},y_{4}),(\epsilon_{1}+\epsilon_{3},y_{5}), (\epsilon_{2}+\epsilon_{3},y_{6})\}$$ is a minimal presentation of $\Gamma_{4}(M)$.
\end{enumerate}
Therefore, every element of $\mathrm{Ap}(\Gamma_{4}(M),5)$ has a unique expression. 
\end{lemma}

\proof It is enough to show that $2b$ has a unique expression.
\medskip

\noindent (1) \, Let $b\equiv 2i(\mathrm{mod} \, 5)$. Then 
$2b=m_{0} 5+ m_{1} (5+d)+ m_{2}b+m_{3}(b+d)$, such that 
$m_{0},m_{1}\geq 0$, $0\leq m_{2}\leq 2$, $0\leq m_{3}\leq 1$. 
\medskip

\begin{enumerate}[(i)] 
\item If $m_{2}=0$, then $2b =m_{0} 5+m_{1}(5+d)+(b+d)$, 
which implies that $b=(m_{0}-1)5+(m_{1}+1)(5+d)$. This 
is not possible because the generating set is minimal.

\item If $m_{2}=1$, then $b =m_{0} 5+m_{1}(5+d)+m_{3}(b+d)$. It is clear 
from this equation that $m_{3}$ must be $0$. Therefore, 
$b=m_{0} 5+m_{1}(5+d)$, which is not possible.
\end{enumerate}
\medskip

\noindent From (i) and (ii) it is clear that $2b$ is uniquely expressed 
in terms of generators of the numerical semigroup. In this case, 
$$\mathrm{minimals}(T)=\{(2,0,0),(0,3,0),(0,0,2),(1,1,0),(1,0,1),(0,1,1)\}.$$
Therefore, a minimal presentation $\rho_{1}$ of $\Gamma_{4}(M)$ 
is given by 
$$\rho_{1} =\{(2\epsilon_{1},y_{1}),(3\epsilon_{2},y_{2}),  (2\epsilon_{3},y_{3}),(\epsilon_{1}+\epsilon_{2},y_{4}),(\epsilon_{1}+\epsilon_{3},y_{5}), (\epsilon_{2}+\epsilon_{3},y_{6})\},$$ 
where $\phi(2\epsilon_{1})=\phi(y_{1})$, 
$\phi(3\epsilon_{2})=\phi(y_{2})$, $\phi(2\epsilon_{3})=\phi(y_{3})$, 
$\phi(\epsilon_{1}+\epsilon_{2})=\phi(y_{4})$, 
$\phi(\epsilon_{1}+\epsilon_{3})=\phi(y_{5})$ and 
$\phi(\epsilon_{2}+\epsilon_{3})=\phi(y_{6})$. 
\medskip

\noindent (2) \, Let $b\equiv 3i(\mathrm{mod} \, 5)$. Then 
$2(5+d)=m_{0} 5+ m_{1} (5+d)+ m_{2}b+m_{3}(b+d)$, where 
$m_{0}\geq 0$, $0\leq m_{1}\leq 2$, since $(5+2d)\leq b$. Therefore 
$0\leq m_{2}\leq 1$ and $m_{3}= 0$.

\begin{enumerate}[(i)] 
\item If $m_{2}=1$, then $2(5+d) = m_{0} 5+m_{1}(5+d)+m_{2}b$, which 
implies that $10-(m_{0}+m_{1})5=(m_{1}-2)d+b$. Now the right hand side 
of the expression is $(m_{1}-2)d+b > (m_{1}-2)d+(5+2d)=m_{1}d+5$. Therefore, the 
left hand side of the expression is $10-(m_{0}+m_{1})5>m_{1}d+5$, 
which is not possible.

\item If $m_{2}=0$, then $10+2d =m_{0} 5+m_{1}(5+d)$. We consider 
two subcasses: $m_{1}=0$ and $m_{1}=1$. If $m_{1}=0$ then 
$10+2d =m_{0} 5$ which implies $2d=5(m_{0}-2)$ which implies 
$ d\, \mathrm{divides}\, 5$, which is not possible. If $m_{1}=1$ 
then $10+2d =m_{0} 5+(5+d)$, which implies that $d=(m_{0}-1)5$. 
Therefore $d \mid 5$, which is not possible.
\end{enumerate}
\medskip

\noindent From (i) and (ii) it is clear that $2(5+d)$ is uniquely expressed in the terms 
of the generators of the numerical semigroup. In this case we have
$$\mathrm{minimals}(T)=\{(3,0,0),(0,2,0),(0,0,2),(1,1,0),(1,0,1),(0,1,1)\}.$$
Therefore, a minimal presentation of $\Gamma_{4}(M)$ is given by  
$$\rho_{2} =\{(3\epsilon_{1},y_{1}),(2\epsilon_{2},y_{2}),  (2\epsilon_{3},y_{3}),(\epsilon_{1}+\epsilon_{2},y_{4}),(\epsilon_{1}+\epsilon_{3},y_{5}), (\epsilon_{2}+\epsilon_{3},y_{6})\},$$ 
where $\phi(3\epsilon_{1})=\phi(y_{1})$, 
$\phi(2\epsilon_{2})=\phi(y_{2})$, 
$\phi(2\epsilon_{3})=\phi(y_{3})$, 
$\phi(\epsilon_{1}+\epsilon_{2})=\phi(y_{4})$, 
$\phi(\epsilon_{1}+\epsilon_{3})=\phi(y_{5})$ and 
$\phi(\epsilon_{2}+\epsilon_{3})=\phi(y_{6})$. 
Therefore, the cardinality of a minimal presentation 
of $\Gamma_{4}(M)$ is indeed $6$.\qed
\medskip

Let $\mathfrak{p}(M)$ be the defining ideal for $\Gamma_{e}(M)$. By the 
first part of Theorem 1 in \cite{rgs1}, we have 
$\mu(\mathfrak{p}(M))= \displaystyle\frac{e(e-1)}{2}-1$. Now we compute 
a minimal generating set for $\mathfrak{p}(M)$. We use the results in \cite{patil} 
to find a minimal generating set for 
$\mathfrak{p}(M)$. Let $\varepsilon_{0}:=(1,0,\ldots,0), \varepsilon_{1}:=(0,1,0\ldots,0), \ldots , \varepsilon_{p}:=(0,\ldots,0,1)$ denote the standard basis of ${(\mathbb{Z}_{\geq 0})}^{p+1}$. 
For $\alpha:=\sum_{i=0}^{i=p} \alpha_{i}\epsilon_{i}$, let us write 
$\deg(\alpha):=\sum_{i=0}^{i=p} \alpha_{i}n_{i}$. For example, if 
$\Gamma=\langle 11, 12, 13, 32, 53\rangle$ be a numerical semigroup and 
$\alpha=(2,3,6,7,8)\in {(\mathbb{Z}_{\geq 0})}^{5}$, then 
$$\deg(\alpha)=2\cdot 11 +3\cdot 12 + 6\cdot 13 + 7\cdot 32 +8\cdot 53.$$
Let us recall some definitions and a Lemma from \cite{patil}, which we 
require for describing a generating set for $\mathfrak{p}(M)$.
\medskip

\begin{definition}
For $s\in \Gamma$, let $\tau(s)$ be the unique maximal element of ${(\mathbb{Z}_{\geq 0})}^{p+1}$ of degree $s$ with 
respect to the lexicographic order. 
\vspace{2mm}
\begin{itemize}
\item We define 
$\underline{\mathcal{B}}:=
\underline{\mathcal{B}}(n_{0},\ldots , n_p)=\{\tau(s)\mid s\in \Gamma\}.$

\item For $\alpha:=\sum_{i=0}^{i=p} \alpha_{i}\epsilon_{i}\in {(\mathbb{Z}_{\geq 0})}^{p+1}$,
we put $x^{\alpha}:=\prod_{i=0}^{p} x^{\alpha_{i}}$. For $\alpha \in \eta$ and $0\leq i \leq p$, let 
$f(\alpha,i):= x_{i}x^{\alpha}-x^{\tau(s)}$, where $s:=\deg(\alpha+\varepsilon_{i})$.

\item For $1\leq i\leq p$, let 
$${\underline{\mathcal{B}}}_{i}:=\{\tau(s)\in \underline{\mathcal{B}}\mid \tau+\varepsilon\notin {\underline{\mathcal{B}}}\}$$
$${\underline{\mathcal{B'}}}_{i}:={\underline{\mathcal{B}}}_{i}\backslash \cup_{i=1}^{p}({\underline{\mathcal{B}}}_{i}+\varepsilon_{j}).$$
\end{itemize}
\end{definition}

\noindent We now state Lemma 3.6 from \cite{patil}.

\begin{lemma}\label{patillemma} Let $H=\langle m_{0},\ldots,m_{e-1}\rangle$ be a numerical semigroup.
Let $i,j\in I_{t} = \{p\in\mathbb{Z} \mid 0\leq p \leq t\}$. Then:
\begin{enumerate}
\item[(i)] $f(\tau(h),0)=0$ for every $h\in H$. 
\item[(ii)]$f(\lambda \epsilon_{0}+\tau,i)=X_{0}^{\lambda}f(\tau(h),i)$ for every $\tau\in \underline{\mathcal{B}}$ and $\lambda\in \mathbb{Z}^{+}$.
\item[(iii)] $f(\tau,i)=0$ for every $\tau\in \underline{\mathcal{B}}\setminus\underline{\mathcal{B}_{i}}$.
\item[(iv)] Either $f(\epsilon_{j},i)=0$ or $\epsilon_{j}\in \underline{\mathcal{B}_{i}^{'}} $.
\item[(v)] Let $\tau\in \underline{\mathcal{B}_{i}}$, $\tau^{'}\in \underline{\mathcal{B}_{j}}$ with $\tau+\epsilon_{i}=\tau^{'}+\epsilon_{j}$. Then: $f(\tau,i)=f(\tau^{'},j)$.
\end{enumerate}
\end{lemma}
\medskip

\begin{example}\label{example=5}
Let us consider the numerical semigroup $\Gamma_{5}=\langle 6,13,20,21,28\rangle$. 
Using Theorem \ref{apery} we get $\mathrm{Ap}(\Gamma_{5},6)=\{0,13,20,21,28,41\}$. 
Since $\Gamma_{5}$ is minimally generated by $M=\{6,13,20,21,28\}$, each element of 
$M$ has unique expression with respect to $\Gamma_{5}$. Hence $\tau(0)=\mathbf{0}$, 
$\tau(13)=\varepsilon_{1}$, $\tau(20)=\varepsilon_{2}$, $\tau(21)=\varepsilon_{3}$, 
$\tau(28)=\varepsilon_{4}$. Now $41=13+28=20+21$, since 
$\varepsilon_{1}+\varepsilon_{4}>\varepsilon_{2}+\varepsilon_{3}$ with respect to 
lexicographic order, we have $\tau(41)=\varepsilon_{1}+\varepsilon_{4}$. Therefore, 
$\underline{\mathcal{B}} = \{\mathbf{0},\varepsilon_{1},\varepsilon_{2},\varepsilon_{3},\varepsilon_{4},\varepsilon_{1}+\varepsilon_{4}\}$, $\underline{\mathcal{B}_{1}} = \{\varepsilon_{1},\varepsilon_{2},\varepsilon_{3},\varepsilon_{1}+\varepsilon_{4}\}$, 
$\underline{\mathcal{B}_{2}} = \{\varepsilon_{1},\varepsilon_{2},\varepsilon_{3},\varepsilon_{4},\varepsilon_{1}+\varepsilon_{4}\}$, $\underline{\mathcal{B}_{3}} = \{\varepsilon_{1},\varepsilon_{2},\varepsilon_{3},\varepsilon_{4},\varepsilon_{1}+\varepsilon_{4}\}$, 
$\underline{\mathcal{B}_{4}} = \{\varepsilon_{2},\varepsilon_{3},\varepsilon_{4},\varepsilon_{1}+\varepsilon_{4}\}$. We also have 
\begin{align*}
\bigcup_{j=0}^{e-1}({\underline{\mathcal{B}}}_{i}+\varepsilon_{j})
 =  \{\varepsilon_{0}+\varepsilon_{1},\varepsilon_{0}+\varepsilon_{2},\varepsilon_{0}+\varepsilon_{3},\varepsilon_{0}+\varepsilon_{1}+\varepsilon_{4}\}\\
\cup \{2\varepsilon_{1},\varepsilon_{1}+\varepsilon_{2},\varepsilon_{1}+\varepsilon_{3},2\varepsilon_{1}+\varepsilon_{4}\} \cup 
\{\varepsilon_{1}+\varepsilon_{2},2\varepsilon_{2},\varepsilon_{2}+\varepsilon_{3},\varepsilon_{1}+\varepsilon_{2}+\varepsilon_{4}\}\\
\cup \{\varepsilon_{1}+\varepsilon_{3},\varepsilon_{2}+\varepsilon_{3},2\varepsilon_{3},\varepsilon_{1}+\varepsilon_{3}+\varepsilon_{4}\} \cup \{\varepsilon_{1}+\varepsilon_{4},\varepsilon_{2}+\varepsilon_{4},\varepsilon_{3}+\varepsilon_{4},\varepsilon_{1}+2\varepsilon_{4}\}.
\end{align*}
Therefore, $\underline{\mathcal{B'}}_{1} = \{\varepsilon_{1},\varepsilon_{2},\varepsilon_{3}\}$, 
$\underline{\mathcal{B'}}_{2} = \{\varepsilon_{1},\varepsilon_{2},\varepsilon_{3}, 
\varepsilon_{4}\}$, $\underline{\mathcal{B'}}_{3} = \{\varepsilon_{1},\varepsilon_{2},\varepsilon_{3},\varepsilon_{4}\}$, $\underline{\mathcal{B'}}_{4} = \{\varepsilon_{2},\varepsilon_{3},\varepsilon_{4}\}$. If we take $\alpha=\varepsilon_{2}$ and $i=3$, 
then $s=\deg(\alpha+\varepsilon_{3})=41$, $\tau(s)=\varepsilon_{1}+\varepsilon_{4}$ and $f(\varepsilon_{2},3)=x_{2}x_{3}-x_{1}x_{4}=f(\varepsilon_{3},2)$.
\end{example}
\medskip

\begin{lemma}\label{mg5}
Let $e\geq 5$, then 
$$\mathcal{G}=\bigcup_{i=1}^{e-1}\{ f(\epsilon_{j},i)\mid \epsilon_{j}\in {\underline{\mathcal{B'}}_{i}},\quad i\leq j\leq e-1\}$$ is a minimal generating set for $\mathfrak{p}(M)$. 
\end{lemma}

\proof Let us write $n_{i}=a+id$, for $0\leq i \leq e-3,$ $n_{e-2}=b$ and $n_{e-1}=b+d$. 
We consider two main cases: $b\equiv (a-3)i(\mathrm{mod}\, a)$ and 
$b\equiv (a-2)i\, (\mathrm{mod}\, a)$, where $d\equiv i(\mathrm{mod}\, a)$. 
Let $\mathcal{B}=\text{Ap}(\Gamma_{e}(M),a)$.
\medskip

\noindent\textbf{Case 1.} First we assume that $b\equiv (a-3)i(\mathrm{mod}\, a)$, where $d\equiv i(\mathrm{mod}\, a)$. Then, \begin{alignat*}{3}
&\mathcal{B}&&= \{0, a+d,\ldots, a+(e-3)d, b, b+d, b+a+2d\}\\
&\underline{\mathcal{B}} && = \{0,\epsilon_{1},\epsilon_{2},\ldots,\epsilon_{e-3},\epsilon_{e-2},\epsilon_{e-1}, \epsilon_{e-1}+\epsilon_{1}\}\\
&\underline{\mathcal{B}}_{1} && =  \{\epsilon_{1},\epsilon_{2},\ldots,\epsilon_{e-3},\epsilon_{e-2}, \epsilon_{e-1}+\epsilon_{1}\}\\
&\underline{\mathcal{B}}_{2} && =  \{\epsilon_{1},\epsilon_{2},\ldots,\epsilon_{e-2},\epsilon_{e-1}, \epsilon_{e-1}+\epsilon_{1}\}\\
&\,  && \, \,  \vdots\\
&\underline{\mathcal{B}}_{e-2} && =\{\epsilon_{1},\epsilon_{2},\ldots,\epsilon_{e-2} ,\epsilon_{e-1}, \epsilon_{e-1}+\epsilon_{1}\}\\ 
&\underline{\mathcal{B}}_{e-1} && =\{\epsilon_{2},\ldots,\epsilon_{e-3},\epsilon_{e-2} ,\epsilon_{e-1}, \epsilon_{e-2}+\epsilon_{2}\}\\
 &\underline{\mathcal{B'}}_{1} && =  \{\epsilon_{1},\epsilon_{2},\ldots ,\epsilon_{e-2} \}\\
&\underline{\mathcal{B'}}_{2} && =  \{\epsilon_{1},\epsilon_{2},\ldots,\epsilon_{e-2},\epsilon_{e-1}\}\\
&\,  && \, \,  \vdots\\
&\underline{\mathcal{B'}}_{e-2} && =\{\epsilon_{1},\epsilon_{3},\ldots,\epsilon_{e-2} ,\epsilon_{e-1}\}\\ 
&\underline{\mathcal{B'}}_{e-1} && =\{\epsilon_{2},\ldots, \epsilon_{e-3},\epsilon_{e-2} ,\epsilon_{e-1}\}
\end{alignat*}
Now, since $f(\epsilon_{j}, i)= f(\epsilon_{i},j)$, using Theorem 3.7 in 
chapter 3 of \cite{patil} we can say that the set 
$$\mathcal{G}=\bigcup_{i=1}^{e-1}\{ f(\epsilon_{j},i)\mid i\leq j\leq e-1, \epsilon_{j}\in {\underline{\mathcal{B'}}_{i}}\}
\setminus\{f(\epsilon_{e-1},1)\}$$
generates the defining ideal $\mathfrak{p}(M)$. The elements of 
$\mathcal{G}$ are of the following forms:
\begin{enumerate}[(i)]
\item $f_{i}:=f(\epsilon_{i},i)=x_{i}x^{\epsilon_{i}}-x^{\tau(h_{i})}=x_{i}^{2}-x^{\tau(h_{i})}$; where $h_{i}:=\deg(2\epsilon_{i})$ and $1\leq i\leq e-1$.
\item $f_{ij}=f(\epsilon_{j},i)=x_{i}x^{\epsilon_{j}}-x^{\tau(h_{ij})}=x_{i}x_{j}-x^{\tau(h_{ij})}$, where $h_{ij}:=\deg(\epsilon_{i}+\epsilon_{j})$ and $1\leq i<j\leq e-1$.
\end{enumerate}
We claim that $\mathcal{G}$ is also a minimal generating set for the ideal $\mathfrak{p}(M)$.
\medskip

\noindent\textbf{Generators of type (i).} \, Consider
\begin{align*}
f_{i}&=x_{i}^{2}-x^{\tau(h_{i})}= \sum\limits_{l=1,l\neq i}^{e-1}\alpha_{l}(x_{l}^{2}-x^{\tau(h_{l})})+\sum\limits_{l,k, k>l}^{e-1}\alpha_{lk}(x_{l}x_{k}-x^{\tau(h_{lk})})\\
&=\underbrace{\sum\limits_{l=1,l\neq i}^{e-1}\alpha_{l}x_{l}^{2}+\sum\limits_{l,k, k>l}^{e-1}\alpha_{lk}x_{l}x_{k}}_{I}-\underbrace{\left(\sum\limits_{l=1,l\neq i}^{e-1}\alpha_{l}x^{\tau(h_{l})}+\sum\limits_{l,k, k>l}^{e-1}\alpha_{lk}x^{\tau(h_{lk})}\right)}_{II}.
\end{align*}
From the above equation it is clear that $x_{i}^{2}$ can occur only in part $II$ 
of the above equation. Now we consider the following cases.
\medskip

\noindent\textbf{Case (a).} Suppose that $\alpha_{l}=c_{l}x_{i}$ or $\alpha_{lk}=c_{lk}x_{i}$, 
for some $1\leq l<k\leq (e-1)$, where $c_{l}, c_{lk}\in k$. Therefore, either 
$f_{l}=x_{l}^{2}-x_{i}$ or $f_{lk}=x_{l}x_{k}-x_{i}$. 
If $f_{l}=x_{l}^{2}-x_{i}$, then $2n_{l}=n_{i}$, which is a contradiction. 
If $f_{lk}=x_{l}x_{k}-x_{i}$, then $n_{l}+n_{k}=n_{i}$ gives a contradiction 
to the minimality of the generating set $M$.
\medskip
 
\noindent\textbf{Case (b).} Suppose that $\alpha_{l}=c_{l}$ and $\alpha_{lk}=c_{lk}$, 
for some $1\leq l<k\leq (e-1)$, where $c_{l}, c_{lk}\in k$. Therefore, either 
$f_{l}=x_{l}^{2}-x_{i}^{2}$ or $f_{lk}=x_{l}x_{k}-x_{i}^{2}$. Suppose that 
$f_{l}=x_{l}^{2}-x_{i}^{2}$. Since $\tau(2n_{i})\in (\mathbb{Z}_{\geq 0})^{e}$ 
is the unique maximal element of degree $2n_{i}$, we have $x^{\tau (2{n}_{l})}=x^{\tau (2n_{i})}= x_{i}^{2}$, 
which gives $f_{i}=0$ and that is impossible. If $f_{lk}=x_{l}x_{k}-x_{i}^{2}$, 
then $x^{\tau (n_{l}+n_{k})}=x^{\tau (2n_{i})}= x_{i}^{2}$. This gives us $f_{i}=0$, 
which contradicts part (iv) of Lemma \ref{patillemma}. Therefore the set $\{f_{i}\mid 1\leq i\leq (e-1)\}$ 
is a subset of a minimal generating set.
\medskip

\noindent\textbf{Generators of type (ii).} \, Let $1\leq i, j \leq e-1$. Consider 
\begin{align*}
f_{ij}&=x_{i}x_{j}-x^{\tau(h_{ij})}\\
&=\sum\limits_{l=1}^{e-1}\alpha_{l}(x_{l}^{2}-x^{\tau(h_{l})})+\sum\limits_{l,k, k>l,l\neq i, }^{e-1}\alpha_{lk}(x_{l}x_{k}-x^{\tau(h_{lk})})\\
&+ \sum\limits_{ k>i,k\neq j}^{e-1}\alpha_{ik}(x_{i}x_{k}-x^{\tau(h_{ik})})\\
&=\underbrace{\sum\limits_{l=1}^{e-1}\alpha_{l}x_{l}^{2}+\sum\limits_{l,k, k>l,l\neq i}^{e-1}\alpha_{lk}x_{l}x_{k}}_{I}\\
&-\underbrace{\left(\sum\limits_{l=1}^{e-1}\alpha_{l}x^{\tau(h_{l})}+\sum\limits_{l,k, k>l,l\neq i}^{e-1}\alpha_{lk}x^{\tau(h_{lk})}+\sum\limits_{ k>i,k\neq j}^{e-1}\alpha_{ik}x^{\tau(h_{ik})}\right)}_{II}.
\end{align*}
We note that $x_{i}x_{j}$ can occur only in the part $II$ of the above equation. 
Then following cases will occur.
\medskip

\noindent\textbf{Case (a).} Suppose that $\alpha_{l}=c_{l}x_{i}$ or $\alpha_{lk}=c_{lk}x_{i}$, 
where $c_{l}, c_{lk}\in k$, for some $1\leq l<k\leq (e-1)$. Then, we must have: 
$f_{l}=x_{l}^{2}-x_{j}$, $f_{lk}=x_{l}x_{k}-x_{j}$ or $f_{ik}=x_{i}x_{k}-x_{j}$. 
Each case gives a contradiction to the fact that $M$ is a minimal generating set of 
$\Gamma_{e}(M)$. 
\medskip

\noindent\textbf{Case (b).} Suppose that $\alpha_{l}=c_{l}x_{j}$ or $\alpha_{lk}=c_{lk}x_{j},$ 
where $c_{l}, c_{lk}\in k$, for  $1\leq l<k\leq (e-1)$. A similar argument works for 
this case. Now, let us assume that $\alpha_{l}=c_{l}$ and $\alpha_{lk}=c_{lk}$, 
where $c_{l}, c_{lk}\in k$, $1\leq l<k\leq (e-1)$. Then, one of these must hold: 
$f_{l}=x_{l}^{2}-x_{i}x_{j}$, $f_{lk}=x_{l}x_{k}-x_{i}x_{j}$, $f_{ik}= x_{l}x_{k}-x_{i}x_{j}$. 
If $f_{l}=x_{l}^{2}-x_{i}x_{j}$, then since $\tau(2n_{l})\in (\mathbb{Z}_{\geq 0})^{e}$ is 
the unique maximal element of degree $2n_{l}$, therefore $x^{\tau (2{n_{l}})}=x^{\tau (n_{i}+n_{j})}= x_{i}x_{j}$. 
This gives $f_{j}=0$, which is a contradiction to Lemma \ref{patillemma}. If $f_{lk}=x_{l}x_{k}-x_{i}x_{j}$, then $\tau(n_{l}+n_{k})\in (\mathbb{Z}_{\geq 0})^{e}$ being the unique maximal element of degree $(n_{i}+n_{j})$, 
we have $X^{\tau (n_{l}+n_{k})}=X^{\tau (n_{i}+n_{j})}= x_{l}x_{k}$. This gives $f_{ij}=0$, 
which is a contradiction to Lemma \ref{patillemma}. Lastly, if 
$f_{ik}=x_{i}x_{k}-x_{i}x_{j}$, then it leads to a contradiction to 
the minimality of the generating set $M$ of the semigroup $\Gamma_{e}(M)$.
\medskip

Therefore, the set $\{f_{ij}\mid 1\leq i\leq e-1,\, i<j\leq e-1\} $ is a subset of a minimal generating 
set $\mathcal{G}.$ From the cases (i) and (ii), it is clear that $\mathcal{G}$ forms a minimal generating 
set for the ideal $\mathfrak{p}(M)$. It can be easily seen that the cardinality of the set $\mathcal{G}$ 
is $\sum\limits_{i=1}^{e-1}\sum\limits_{j\geq i}^{e-1}(\epsilon_{j},i) = \displaystyle\frac{e(e-1)}{2}-1$.
\medskip

\noindent\textbf{Case 2.} We now assume that $b\equiv (a-2)i\, (\mathrm{mod}\, a)$, where $d\equiv i(\mathrm{mod}\, a)$. 
Then, 
\begin{alignat*}{3}
&\mathcal{B}&&= \{0, a+d,\ldots, a+(a-4)d, b, b+d, (a+(e-p+1)d)+(a+(p-3)d)\}\\
&\underline{\mathcal{B}} && = \{0,\epsilon_{1},\ldots,\epsilon_{e-2},\epsilon_{e-1}, \epsilon_{1}+\epsilon_{e-3}\}\\
&\underline{\mathcal{B}}_{1} && =  \{\epsilon_{1},\ldots ,\epsilon_{e-3},\epsilon_{e-2}, \epsilon_{1}+\epsilon_{e-3}\}\\
&\underline{\mathcal{B}}_{2} && =  \{\epsilon_{1},\ldots ,\epsilon_{e-2},\epsilon_{e-1}, \epsilon_{1}+\epsilon_{e-3}\}\\
&\,  && \, \,  \vdots\\
&\underline{\mathcal{B}}_{e-4} &&  =  \{\epsilon_{1},\epsilon_{2},\ldots ,\epsilon_{e-2},\epsilon_{e-1}, \epsilon_{1}+\epsilon_{e-3}\}\\
&\underline{\mathcal{B}}_{e-3} && =\{\epsilon_{2},\epsilon_{3},\ldots,\epsilon_{e-2}, \epsilon_{e-1}, \epsilon_{1}+\epsilon_{e-3}\}\\
&\underline{\mathcal{B}}_{e-2} &&  =  \{\epsilon_{1},\epsilon_{2},\ldots ,\epsilon_{e-2},\epsilon_{e-1}, \epsilon_{1}+\epsilon_{e-3}\}\\
&\underline{\mathcal{B}}_{e-1} &&  =  \{\epsilon_{1},\epsilon_{2},\ldots ,\epsilon_{e-2},\epsilon_{e-1}, \epsilon_{1}+\epsilon_{e-3}\}\\
&\,  && \, \,  \vdots\\ 
 &\underline{\mathcal{B'}}_{1} && =  \{\epsilon_{1}, \epsilon_{2},\ldots,\epsilon_{e-2} \}\\
&\underline{\mathcal{B'}}_{2} && =  \{\epsilon_{1},\epsilon_{2},\ldots,\epsilon_{e-2},\epsilon_{e-1}\}\\
&\,  && \, \,  \vdots\\
&\underline{\mathcal{B'}}_{e-4} &&  =  \{\epsilon_{1},\epsilon_{2},\ldots ,\epsilon_{e-2},\epsilon_{e-1}\}\\
&\underline{\mathcal{B'}}_{e-3} && =\{\epsilon_{2},\epsilon_{3},\ldots,\epsilon_{e-2}, \epsilon_{e-1}\}\\
&\underline{\mathcal{B'}}_{e-2} &&  =  \{\epsilon_{1},\epsilon_{2},\ldots ,\epsilon_{e-2},\epsilon_{e-1}\}\\
&\underline{\mathcal{B'}}_{e-1} && =\{\epsilon_{1},\epsilon_{2},\ldots, \epsilon_{e-2},\epsilon_{e-1}\}
\end{alignat*}

\noindent By a similar argument as in the previous case, the set 
$$\mathcal{G}=\bigcup_{i=1}^{e-1}\{ f(\epsilon_{j},i)\mid i\leq j\leq e-1, \epsilon_{j}\in {\underline{\mathcal{B'}}_{i}}\}\setminus\{f(\epsilon_{e-1},1)\}$$ 
is a minimal generating set for defining ideal $\mathfrak{p}(M)$. \qed

\begin{example}
Let $\Gamma_{5}$ be as in \ref{example=5}, we use GAP computer algebra system \cite{GAP4} to compute $\tau(s)$. 
It follows that the set 
\begin{align*}
\mathcal{G}&=\{f(\varepsilon_{1},1),f(\varepsilon_{2},1),f(\varepsilon_{3},1),f(\varepsilon_{2},2),f(\varepsilon_{3},2),f(\varepsilon_{4},2),f(\varepsilon_{3},3),f(\varepsilon_{4},3),f(\varepsilon_{4},4) \}\\
&=\{x_{1}^{2}-x_{0}x_{2},x_{1}x_{2}-x_{0}^{2}x_{3},x_{1}x_{3}-x_{0}x_{4},x_{2}^{2}-x_{0}^{2}x_{4}, x_{2}x_{3}-x_{1}x_{4},\\
&\quad x_{2}x_{4}-x_{0}^{8}, x_{3}^{2}-x_{0}^{7}, x_{3}x_{4}-x_{0}^{6}x_{1}, x_{4}^{2}-x_{0}^{6}x_{2} \}
\end{align*}
forms a minimal generating set for the ideal $\mathfrak{p}(M)$.
\end{example}

\bibliographystyle{amsalpha}

\end{document}